\theoremstyle{plain} 
\newtheorem{theorem}{Theorem} 
\newtheorem{lemma}[theorem]{Lemma} 
\newtheorem{corollary}[theorem]{Corollary}
\theoremstyle{definition} 
\newtheorem*{remark}{Remark}
\begin{document}

\title[Stability results]{Stability results for sections \\ of convex bodies}

\author{M.~Stephen and V.~Yaskin}

\address{Matthew Stephen, Department of Mathematical and Statistical Sciences, University of Alberta, Edmonton, Alberta, T6G 2G1, Canada}
\email{mastephe@ualberta.ca}

\address{Vladyslav Yaskin, Department of Mathematical and Statistical Sciences, University of Alberta, Edmonton, Alberta, T6G 2G1, Canada}
\email{yaskin@ualberta.ca}

\subjclass[2010]{52A20 (primary), and 42B10 (secondary)}

\thanks{Both authors were partially supported by NSERC}

\keywords{cross-section body, intersection body, stability}

\begin{abstract}
It is shown by Makai, Martini, and \'Odor that a convex body $K$, all of whose maximal sections pass through the origin, must be origin-symmetric. We prove a stability version of this result. We also discuss a theorem of Koldobsky and Shane about determination of convex bodies by fractional derivatives of the parallel section function and establish the corresponding stability result. 
\end{abstract}

\maketitle

\section{Introduction}

Let $K$ be a {\it convex body} in $\mathbb R^n$, i.e. a compact convex set with non-empty interior. More generally, a {\it body} is a compact subset of $\mathbb{R}^n$ which is equal to the closure of its interior. Throughout the paper, we assume all bodies include the origin as an interior point. Now, we say $K$ is {\it origin-symmetric} if $K=-K$. The {\it parallel section function} of $K$ in the direction $\xi\in S^{n-1}$ is defined by
\begin{equation*}%\label{Adef}
A_{K,\xi}(t)=\mathrm{vol}_{n-1}(K\cap \{\xi^{\perp}+t\xi\}), \quad t\in \mathbb R.
\end{equation*}
Here, $\xi^{\perp}=\{x\in \mathbb R^n:\, \langle x,\xi\rangle =0\}$ is the hyperplane passing through the origin and orthogonal to the vector $\xi$.

For the study of central sections it is often more natural to consider a larger class of bodies than the class of convex bodies. Recall that if $K$ is a body containing the origin in its interior and star-shaped with respect to the origin, its {\it radial function} is defined by
\begin{align*}
\rho_K(\xi) = \max\{a \ge 0 :a \xi \in K\}, \quad \xi\in S^{n-1}.
\end{align*}
Geometrically, $\rho_K(\xi)$ is the distance from the origin to the point on the boundary in the direction of $\xi$. If $\rho_K$ is continuous, then $K$ is called a {\it star body}. Every convex body (with the origin in its interior) is a star body. The {\it intersection body} of a star body $K$ is the star body $IK$ with radial function
\begin{align*}
\rho_{IK}(\xi) = \mathrm{vol}_{n-1} ( K\cap\xi^\perp ) , \quad \xi\in S^{n-1} .
\end{align*}
Intersection bodies were introduced by Lutwak in \cite{L} and have been actively studied since then. For example, they played a crucial role in the solution of the Busemann-Petty problem (see \cite{Koldobsky} for details).

The {\it cross-section body} of a convex body $K$ is the star body $CK$ with radial function
\begin{align*}
\rho_{CK}(\xi) = \max_{t\in\mathbb{R}} A_{K,\xi}(t), \quad \xi\in S^{n-1} .
\end{align*}
Cross-section bodies were introduced by Martini \cite{Ma}. For properties of these bodies and related questions see \cite{B}, \cite{F}, \cite{GRYZ}, \cite{MMO}, \cite{Me}, \cite{NRZ}.

Brunn's theorem asserts that the origin-symmetry of a convex body $K$ implies
\begin{align*}
A_{K,\xi}(0) =\max_{t\in \mathbb R} A_{K,\xi}(t)
\end{align*}
for all $\xi\in S^{n-1}$. In other words, $CK = IK$. The converse statement was proved by Makai, Martini and \'Odor \cite{MMO}.

\begin{theorem}[Makai, Martini and \'Odor]\label{MMO}
If $K$ is a convex body in $\mathbb R^n$ such that $CK = IK$, then $K$ is origin-symmetric.
\end{theorem}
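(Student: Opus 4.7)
The condition $CK = IK$ means that for every $\xi \in S^{n-1}$ the parallel section function $A_{K,\xi}$ attains its maximum at $t = 0$. My plan is to combine this with Brunn's theorem to obtain a first-order criticality condition, and then extract central symmetry from it. By Brunn's theorem, $t \mapsto A_{K,\xi}(t)^{1/(n-1)}$ is concave on its support, which contains $0$ in its interior, so the hypothesis is equivalent to $A'_{K,\xi}(0^+) \leq 0 \leq A'_{K,\xi}(0^-)$ for every $\xi$. By standard approximation (the conclusion $K = -K$ being preserved under Hausdorff limits), we may assume $K$ is smooth and strictly convex, so $A_{K,\xi}$ is $C^1$ in $t$ and the condition collapses to
\begin{equation*}
A'_{K,\xi}(0) = -\int_{\partial K \cap \xi^\perp} \frac{\langle \nu(x), \xi \rangle}{\sqrt{1-\langle \nu(x),\xi\rangle^2}} \, d\mathcal{H}^{n-2}(x) = 0 \quad \text{for every } \xi \in S^{n-1},
\end{equation*}
where $\nu$ is the outer unit normal; this identity is obtained from the distributional identity $\partial_\xi \mathbf{1}_K = -\langle \nu, \xi\rangle\, d\mathcal{H}^{n-1}|_{\partial K}$ together with the coarea formula on $\partial K$.

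In the planar case $n = 2$, $\partial K \cap \xi^\perp$ consists of only two points $q_+, q_-$ (the endpoints of the chord of $K$ through the origin orthogonal to $\xi$), and the integral is a two-term sum whose vanishing yields the pointwise condition $\nu(q_+) = -\nu(q_-)$. Writing $p(\theta) = h(\theta)(\cos\theta,\sin\theta) + h'(\theta)(-\sin\theta,\cos\theta)$ for the boundary parametrization by outer-normal angle (with $h = h_K$), antipodality together with the automatic collinearity of $q_\pm$ through the origin forces $p(\theta+\pi) = \lambda(\theta) p(\theta)$ with $\lambda(\theta) < 0$, for every $\theta \in [0,2\pi)$. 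Equating components in the moving frame $\{(\cos\theta,\sin\theta),(-\sin\theta,\cos\theta)\}$ gives $h(\theta+\pi) = -\lambda(\theta) h(\theta)$ and $h'(\theta+\pi) = -\lambda(\theta) h'(\theta)$; differentiating the first and comparing with the second forces $\lambda'(\theta) h(\theta) = 0$. Since $h > 0$, $\lambda$ is constant, and $2\pi$-periodicity of $p$ then requires $\lambda = -1$. Hence $p(\theta+\pi) = -p(\theta)$ and $K = -K$.

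For $n \geq 3$ the integral above is a genuine $(n-2)$-dimensional surface integral that can vanish without vanishing pointwise, so the planar argument breaks down; this is where the main obstacle lies. The natural route is Fourier-analytic: using the slice representation $A_{K,\xi}(t) = (2\pi)^{-1}\int_\mathbb{R} \widehat{\mathbf{1}_K}(z\xi)\,e^{izt}\,dz$, the criticality condition becomes a vanishing first moment of $\widehat{\mathbf{1}_K}$ along every ray through the origin. The difficulty is that this alone is not sufficient to force the evenness of $A_{K,\xi}$ in $t$ (which is equivalent to $K=-K$); one needs to genuinely couple this condition with the concavity from Brunn's theorem, since convexity is essential to rule out non-convex counterexamples. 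Making this coupling rigorous is, I expect, the technical heart of the argument.
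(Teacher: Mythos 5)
The paper does not give a proof of Theorem \ref{MMO}; it cites the original reference [MMO] and derives the result implicitly by letting $\varepsilon\to 0$ in the stability Theorem \ref{MainResult_1}. Assessed on its own terms, your proposal has two genuine gaps, one of which you acknowledge. The first is the reduction to the smooth case: the hypothesis $CK=IK$ is \emph{not} preserved under smoothing, so if $K_\delta\to K$ in Hausdorff distance there is no reason the maximum of $A_{K_\delta,\xi}$ occurs at $t=0$, and your smooth argument would only show that the approximants $K_\delta$ are origin-symmetric -- which they generally are not -- rather than that $K$ is. Making this rigorous requires quantitative control in both directions (criticality of $A_{K,\xi}$ at $0$ should make $A'_{K_\delta,\xi}(0)$ small, and approximate vanishing of $A'_{K_\delta,\xi}(0)$ should make $\rho(K_\delta,-K_\delta)$ small); this is exactly the content of Lemmas \ref{approx}, \ref{integral of parallel}, and \ref{keylemma} here, so the reduction secretly requires the stability theorem you are trying to bypass. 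Your planar argument is otherwise correct in the smooth strictly convex case.

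The second gap is the case $n\geq 3$, which you leave as a sketch. Your instinct that the missing ingredient is a coupling between Brunn's concavity and Fourier analysis is right; in the smooth case the precise bridge is the Ryabogin--Yaskin identity (formula (\ref{PS Func}) at $p=1$),
\begin{equation*}
\big(\Vert x\Vert_K^{-n+2}-\Vert -x\Vert_K^{-n+2}\big)^{\wedge}(\xi) = -2\pi i\,(n-2)\,A'_{K,\xi}(0).
\end{equation*}
Brunn plus criticality at $t=0$ give $A'_{K,\xi}(0)=0$ for all $\xi$, so the Fourier transform of the odd homogeneous function on the left vanishes on $S^{n-1}$; by the nonvanishing of the eigenvalues $\lambda_m(n,2)$ from Lemma \ref{eigen} (equivalently, injectivity of $I_2$ on $C^\infty(S^{n-1})$), the function itself vanishes, giving $\rho_K=\rho_{-K}$. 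Your planar argument is a pointwise incarnation of this that works only because a $0$-dimensional integral vanishes pointwise; for $n\geq 3$ the harmonic-analytic route is indispensable, and you would still need to handle the non-smooth case via the stability machinery, as above.
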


%A Fourier analytic proof of this result was given by Ryabogin and Yaskin \cite{Rya&Yask2012}.

The goal of the present paper is to provide a stability version of Theorem~\ref{MMO}.  For star bodies $K$ and $L$ in $\mathbb R^n$, the {\it radial metric} is defined as
\begin{align*}
\rho(K,L) = \max_{\xi\in S^{n-1}}|\rho_K(\xi) - \rho_L(\xi)|.
\end{align*}
We prove the following result.

\begin{theorem}\label{MainResult_1}
Let $K$ be a convex body in $\mathbb{R}^n$ contained in a ball of radius $R$, and containing a ball of radius $r$, where both balls are centred at the origin. If there exists $0<\varepsilon  < \min\left\{\left( \frac{\sqrt{3} \, r}{ 6\sqrt{3}\,\pi r + 32\pi } \right)^2 ,\, \frac{r^2}{16} \right\}$ so that
\begin{align*}
\rho(CK, IK) \leq \varepsilon ,
\end{align*}
then
\begin{align*}
\rho(K,-K) \leq C(n,r,R) \, \varepsilon^q \quad \mbox{where} \quad q =
	\begin{cases}
		\frac{1}{2} &\mbox{ if } n=2,\\
		\frac{1}{2(n+1)} &\mbox{ if } n=3,4, \\
		\frac{1}{(n-2)(n+1)} &\mbox{ if } n\geq 5 .
	\end{cases}
\end{align*}
Here, $C(n,r,R)>0$ are constants depending on the dimension, $r$, and $R$.
\end{theorem}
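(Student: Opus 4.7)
Since $\rho_{CK}(\xi)=\max_{t}A_{K,\xi}(t)\ge A_{K,\xi}(0)=\rho_{IK}(\xi)$, the hypothesis is equivalent to the pointwise bound
\[
0\le\max_{t\in\mathbb R}A_{K,\xi}(t)-A_{K,\xi}(0)\le\varepsilon\qquad(\xi\in S^{n-1}).
\]
Let $t_\xi$ denote any point at which $A_{K,\xi}$ attains its maximum.

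\textbf{Stage 2 (Localizing the maximum).} By Brunn's theorem, $g_\xi:=A_{K,\xi}^{1/(n-1)}$ is concave on its support $[-\rho_K(-\xi),\rho_K(\xi)]\subset[-R,R]$ and vanishes at the endpoints, while the inscribed ball $rB_2^n\subset K$ gives the pointwise lower bound $g_\xi(t)\ge\omega_{n-1}^{1/(n-1)}\sqrt{r^2-t^2}$ for $|t|\le r$. Applying the mean value theorem to $x\mapsto x^{n-1}$ converts $A_{K,\xi}(t_\xi)-A_{K,\xi}(0)\le\varepsilon$ into a bound $g_\xi(t_\xi)-g_\xi(0)\le C_0(n,r)\,\varepsilon$, and combining this with concavity of $g_\xi$ across the origin (together with the inscribed-ball lower bound at $-t_\xi$), calibrated against the test case of a round ball---for which $g_\xi$ is genuinely quadratic near its maximum---produces an estimate of the form
\[
|t_\xi|\le C_1(n,r,R)\,\sqrt{\varepsilon}.
\]
The threshold $\varepsilon<r^2/16$ in the hypothesis of the theorem is exactly what is needed to keep $|t_\xi|<r/2$, placing the maximizer well inside the inscribed ball for the remaining step. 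Geometrically, every maximal hyperplane section of $K$ now passes within distance $O(\sqrt\varepsilon)$ of the origin.

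\textbf{Stage 3 (From localized maxima to a pointwise bound on $\rho_K-\rho_{-K}$).} This is the main obstacle. In dimension $n=2$, the section function $A_{K,\xi}$ is itself concave and one has the explicit identity $A_{K,\xi}(0)=\rho_K(J\xi)+\rho_K(-J\xi)$, where $J$ denotes $90^\circ$ rotation; a direct one-dimensional argument then converts $|t_\xi|\le C_1\sqrt\varepsilon$ into $|\rho_K(\eta)-\rho_K(-\eta)|\le C(r,R)\sqrt\varepsilon$ for every $\eta\in S^1$, giving $q=1/2$. In dimension $n\ge 3$ the conversion is harmonic-analytic: Koldobsky-type formulas relate the first-order behaviour of $A_{K,\xi}$ near $t=0$ to Fourier transforms of powers of the radial function, so that the smallness of $t_\xi$ yields an $L^\infty$ control on the spherical Radon transform of the odd part of $\rho_K^{n-1}$. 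Inverting this transform, which gains $(n-1)/2$ derivatives in the Sobolev scale on $S^{n-1}$, and interpolating the resulting Fourier-side bound against the a priori $C^{1,1}$-type estimate on $\rho_K$ coming from the ball bounds $r,R$, produces the exponent $q=1/(2(n+1))$ when $n=3,4$. In dimension $n\ge 5$ the a priori regularity no longer closes the Sobolev gap to $C(S^{n-1})$, and an additional interpolation step introduces a further loss, producing the weaker exponent $q=1/((n-2)(n+1))$. The main technical difficulty is carrying out this inversion-plus-interpolation sharply in each dimensional regime.
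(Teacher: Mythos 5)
Your Stage~2 contains a genuine gap that breaks the rest of the argument. You claim that the hypothesis
\[
\max_{t}A_{K,\xi}(t)-A_{K,\xi}(0)\le\varepsilon
\]
forces the maximizer $t_\xi$ to satisfy $|t_\xi|\le C\sqrt\varepsilon$. This does not follow from Brunn's theorem and the $r,R$ ball bounds. Concavity of $g_\xi=A_{K,\xi}^{1/(n-1)}$ gives an \emph{upper} bound on the second derivative of $g_\xi$ (it is nonpositive), but the ball bounds do not give a \emph{lower} bound on the curvature of $g_\xi$ near its maximum, which is what you would need to localize the maximizer. The function $A_{K,\xi}$ can be nearly flat across a long interval and have its maximum far from the origin while the gap $\max A_{K,\xi}-A_{K,\xi}(0)$ remains tiny. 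Concretely, in $\mathbb R^2$ take a trapezoid with vertices $(\pm a,-1)$, $(\pm b,1)$ and $a-b=\varepsilon$: for $\xi=e_2$ one has $A_{K,\xi}$ affine in $t$, $t_\xi=-1$ independent of $\varepsilon$, yet $\max A_{K,\xi}-A_{K,\xi}(0)=a-b=\varepsilon$, and the inradius and circumradius stay bounded away from $0$ and $\infty$. (Your phrase ``calibrated against the test case of a round ball'' appeals to a nondegenerate quadratic behaviour that need not occur.) This is precisely why the paper's Corollary~\ref{Cor 1} assumes $|t(\xi)|\le\varepsilon$ as a \emph{separate, stronger} hypothesis, rather than deducing it.

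The paper's proof avoids localizing $t_\xi$ altogether. After approximating $K$ by smooth bodies $K_\delta$ (Lemma~\ref{approx}, a step your proposal omits but which is needed to make sense of $A'$ and $A''$), Lemma~\ref{integral of parallel} produces, for each $\xi$, a point $c_\delta(\xi)\in[-\sqrt\varepsilon,\sqrt\varepsilon]$ at which $|A_{K_\delta,\xi}'(c_\delta(\xi))|\le3\sqrt\varepsilon$: if $|t_{K_\delta}(\xi)|\le\sqrt\varepsilon$ one takes $c_\delta(\xi)=t_{K_\delta}(\xi)$ (where $A'=0$); otherwise the mean value theorem on $[0,\,\mathrm{sgn}(t_{K_\delta}(\xi))\sqrt\varepsilon]$ gives a point with small derivative, since $A_{K_\delta,\xi}$ increases from $0$ toward $t_{K_\delta}(\xi)$. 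One then writes $A'(0)=A'(c_\delta(\xi))-\int_{c_\delta(\xi)}^{0}A''$ and controls $\int_{S^{n-1}}\int_{-\sqrt\varepsilon}^{\sqrt\varepsilon}|A_{K_\delta,\xi}''(t)|\,dt\,d\xi$ by combining the sign information from Brunn's theorem with the Brehm--Voigt formula~\eqref{BK formula} -- this is the real engine of the lemma, and it has no counterpart in your sketch. This yields an $L^2(S^{n-1})$ (or $L^1$ for $n=2$) bound on $\xi\mapsto A_{K_\delta,\xi}'(0)$, which is then converted into a radial-distance bound: for $n=2$ by an explicit integration of $A'_{K_\delta,\theta}(0)=-\rho'/\rho(\theta+\pi/2)+\rho'/\rho(\theta-\pi/2)$, and for $n\ge3$ by the Fourier identity~\eqref{PS Func} with $p=1$ together with Lemma~\ref{keylemma} (eigenvalues of $I_p$, condensed harmonic expansion, Vitale's inequality, and H\"older when $n>2p$). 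Your Stage~3 gestures at the right circle of ideas for $n\ge3$, but it is built on the unproved (and false) Stage~2 claim and does not identify the $L^2$ bound on $A'(0)$ or the $\int|A''|$ estimate that actually make the proof close.
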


\begin{remark}
In the proof of Theorem \ref{MainResult_1}, we give the explicit dependency of $C(n,r,R)$ on $r$ and $R$.
\end{remark}

The following corollary is a straightforward consequence of the Lipschitz property of the parallel section function (Lemma \ref{Lipschitz})  and Theorem \ref{MainResult_1}. Roughly speaking: if, for every direction $\xi\in S^{n-1}$, the convex body $K$ has a maximal section perpendicular to $\xi$ that is close to the origin, then $K$ is close to being origin-symmetric.

\begin{corollary}\label{Cor 1}
Let $K$ be a convex body in $\mathbb{R}^n$ contained in a ball of radius $R$, and containing a ball of radius $r$, where both balls are centred at the origin. Let $L=L(n)$ be the constant given in Lemma \ref{Lipschitz}. If there exists 
\begin{align*}
0<\varepsilon< \min \left\{ \frac{r}{2}, \, \frac{3 r^3}{L R^{n-1}\left( 6\sqrt{3} \pi r +32\pi\right)^2}, \,
	\frac{r^3}{16 L R^{n-1}} \right\} 
\end{align*}
so that, for each direction $\xi\in S^{n-1}$, $A_{K,\xi}$ attains its maximum at some $t=t(\xi)$ with $|t(\xi)|\leq\varepsilon$, then 
\begin{align*}
\rho(K,-K) \leq \widetilde{C}(n,r,R) \, \varepsilon^q.
\end{align*}
Here, $\widetilde{C}(n,r,R)>0$ are constants depending on the dimension, $r$, and $R$, and $q=q(n)$ is the same as in Theorem \ref{MainResult_1}. 
\end{corollary}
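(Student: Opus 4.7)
The strategy is to reduce the hypothesis to the one appearing in Theorem~\ref{MainResult_1}. Observe first that for every $\xi\in S^{n-1}$ one has $\rho_{IK}(\xi)=A_{K,\xi}(0)$ by definition of the intersection body, and $\rho_{CK}(\xi)=\max_{t\in\mathbb R}A_{K,\xi}(t)=A_{K,\xi}(t(\xi))$ by definition of the cross-section body and the assumption that the maximum is attained at $t(\xi)$. Therefore
\begin{equation*}
|\rho_{CK}(\xi)-\rho_{IK}(\xi)|=A_{K,\xi}(t(\xi))-A_{K,\xi}(0),
\end{equation*}
and the whole task is to estimate this difference in terms of $|t(\xi)|\le\varepsilon$.

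This is precisely where I plan to invoke Lemma~\ref{Lipschitz}, which gives a Lipschitz bound for $A_{K,\xi}$ on the interior of its support with constant of order $L(n)R^{n-1}/r$ (the $r$ in the denominator appears because the body contains a ball of radius $r$, and the $R^{n-1}$ factor controls how large the sections can be). The additional requirement $\varepsilon<r/2$ in the hypothesis of the corollary ensures that both $0$ and $t(\xi)$ lie well inside the interval on which this Lipschitz estimate is valid. Combining everything,
\begin{equation*}
|\rho_{CK}(\xi)-\rho_{IK}(\xi)|\le \frac{L(n)R^{n-1}}{r}\,|t(\xi)|\le \frac{L(n)R^{n-1}}{r}\,\varepsilon,
\end{equation*}
uniformly in $\xi\in S^{n-1}$, so that $\rho(CK,IK)\le \varepsilon'$ with $\varepsilon':=L(n)R^{n-1}\varepsilon/r$.

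The next step is simply to verify that $\varepsilon'$ falls into the range to which Theorem~\ref{MainResult_1} applies. The two upper bounds on $\varepsilon$ in the statement of the corollary, namely $\varepsilon<3r^3/\bigl(LR^{n-1}(6\sqrt3\pi r+32\pi)^2\bigr)$ and $\varepsilon<r^3/(16LR^{n-1})$, were engineered precisely so that
\begin{equation*}
\varepsilon'<\min\!\left\{\Bigl(\tfrac{\sqrt3\,r}{6\sqrt3\,\pi r+32\pi}\Bigr)^{\!2},\ \tfrac{r^2}{16}\right\},
\end{equation*}
which is exactly the hypothesis of Theorem~\ref{MainResult_1}. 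Applying that theorem yields
\begin{equation*}
\rho(K,-K)\le C(n,r,R)(\varepsilon')^q=C(n,r,R)\Bigl(\tfrac{L(n)R^{n-1}}{r}\Bigr)^{\!q}\varepsilon^q=:\widetilde C(n,r,R)\,\varepsilon^q,
\end{equation*}
with the same exponent $q=q(n)$ as in the theorem.

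The argument is genuinely routine once Lemma~\ref{Lipschitz} is in hand; the only place one must be careful is the bookkeeping of constants, and the main point I would want to double-check is that the Lipschitz constant produced by Lemma~\ref{Lipschitz} has the dependence on $R^{n-1}/r$ that is consistent with the thresholds on $\varepsilon$ stated in the corollary. Nothing else in the proof is subtle.
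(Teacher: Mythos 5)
Your proposal is correct and is exactly the intended argument: the paper declares the corollary to be a ``straightforward consequence'' of Lemma \ref{Lipschitz} and Theorem \ref{MainResult_1}, and your reduction via the Lipschitz bound $\rho(CK,IK)\le L(n)R^{n-1}r^{-1}\varepsilon$, followed by checking that the stated thresholds on $\varepsilon$ translate exactly into the thresholds on $\varepsilon'$ required by Theorem \ref{MainResult_1}, is the natural (and, one checks, numerically exact) way to carry that out.
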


The proof of Theorem \ref{MainResult_1} is given in Section 4 and consists of a sequence of lemmas from Section 3. The main idea is the following. If $K$ is of class $C^\infty$, then we use Brunn's theorem and an integral formula from \cite{BV} to show that $\rho(CK, IK)$ being small implies that $\int_{S^{n-1}} \big| A_{K,\xi}'(0)\big|^2\, d\xi$ is also small. (Recall that $K$ is called {\it $m$-smooth} or {\it $C^m$},  if $\rho_K\in C^m(S^{n-1})$.) If $K$ is not smooth, we approximate it by smooth bodies, for which the above integral is small. Then  we use the Fourier transform techniques from \cite{Rya&Yask2012} and the tools of spherical harmonics similar to those from \cite{Good&Yask2009} to finish the proof.

As we will see below, the same methods can be used to obtain a stability version of a result of Koldobsky and Shane \cite{KS}.
It is well known that the knowledge of  $A_{K,\xi}(0)$ for all $\xi\in S^{n-1}$ is not sufficient for determining the body $K$ uniquely, unless $K$ is origin-symmetric. However, Koldobsky and Shane have shown that if $A_{K,\xi}(0)$ is replaced by  a fractional derivative of non-integer  order of the function  $A_{K,\xi}(t)$ at $t=0$, then this information does determine the body uniquely.

\begin{theorem}[Koldobsky and Shane]\label{KS}
Let $K$ and $L$ be convex bodies in $\mathbb R^n$.
Let $ -1 < p < n-1$ be a non-integer, and $m$ be an integer greater than $p$. If $K$
and $L$ are $m$-smooth and $$A^{(p)}_{K,\xi}(0) = A^{(p)}_{K,\xi}(0),$$ for all $\xi\in S^{n-1}$, then $$K = L.$$
\end{theorem}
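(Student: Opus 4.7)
The plan is to use the Fourier-analytic method developed by Koldobsky. The key ingredient is a representation formula expressing the fractional derivative $A_{K,\xi}^{(p)}(0)$ (defined by analytic continuation in $p$ of the distribution $t_+^{-p-1}/\Gamma(-p)$) in terms of the Fourier transform, in the sense of distributions, of a negative power of the Minkowski functional $\|\cdot\|_K$. Concretely, for $K$ of class $C^m$ with the origin in its interior and non-integer $p\in(-1,n-1)$ with $m>p$, there is an identity of the form
\begin{equation*}
A_{K,\xi}^{(p)}(0) \;=\; c_{+}(n,p)\bigl(\|\cdot\|_K^{-n+1+p}\bigr)^{\wedge}_{\mathrm{even}}(\xi) \;+\; c_{-}(n,p)\bigl(\|\cdot\|_K^{-n+1+p}\bigr)^{\wedge}_{\mathrm{odd}}(\xi),
\end{equation*}
valid for every $\xi\in S^{n-1}$, where the subscripts denote the even and odd parts in $\xi\mapsto-\xi$ (these arise because $A_{K,\xi}(t)$ is not even in $t$ when $K$ is not origin-symmetric, and because $A_{K,-\xi}(t)=A_{K,\xi}(-t)$). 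The constants $c_{\pm}(n,p)$ involve $\cos(\pi p/2)$ and $\sin(\pi p/2)$, both nonzero precisely because $p$ is non-integer. The $m$-smoothness hypothesis with $m>p$ ensures the analytic continuation defining $A_{K,\xi}^{(p)}(0)$ exists in the classical pointwise sense and that $\|x\|_K^{-n+1+p}$ is regular enough away from the origin for the distributional Fourier formula to apply.

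I would then argue as follows. By hypothesis, the left-hand side of the above representation for $K$ agrees with that for $L$ at every $\xi\in S^{n-1}$. Applying the formula simultaneously at $\xi$ and at $-\xi$ and using that $c_{\pm}(n,p)\neq 0$, the even and odd parts in $\xi$ must individually coincide. Adding them back together gives
\begin{equation*}
\bigl(\|\cdot\|_K^{-n+1+p}\bigr)^{\wedge}(\xi) \;=\; \bigl(\|\cdot\|_L^{-n+1+p}\bigr)^{\wedge}(\xi) \qquad \text{for every } \xi\in S^{n-1}.
\end{equation*}
Since $\|\cdot\|_K^{-n+1+p}$ is homogeneous of degree $-n+1+p$, its distributional Fourier transform is homogeneous of degree $-p-1$ on $\mathbb{R}^n\setminus\{0\}$, and likewise for $L$. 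Equality of two such homogeneous extensions on the unit sphere forces equality on all of $\mathbb{R}^n\setminus\{0\}$.

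Finally, Fourier inversion in the sense of tempered distributions yields $\|x\|_K^{-n+1+p}=\|x\|_L^{-n+1+p}$ pointwise on $\mathbb{R}^n\setminus\{0\}$. Since $p\in(-1,n-1)$ is non-integer, $-n+1+p\in(-n,0)$ is nonzero, and we can take the appropriate real power to conclude $\|x\|_K=\|x\|_L$ for every $x$, hence $K=L$. The main obstacle is establishing (or invoking cleanly from Koldobsky's work) the displayed representation formula and keeping track of the even/odd decomposition forced by the possible lack of origin-symmetry of $K$ and $L$; everything after that is uniqueness of homogeneous distributions and Fourier inversion.
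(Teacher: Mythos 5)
Your proposal is correct, and it is exactly the route implied by the machinery set up in this paper: the representation formula you invoke is equation (\ref{PS Func}), the Ryabogin--Yaskin identity, and the even/odd decomposition in $\xi$ together with the nonvanishing of $\cos(p\pi/2)$ and $\sin(p\pi/2)$ for non-integer $p$ is precisely the device used in the proof of Theorem~\ref{MainResult_2}. Note that the paper itself does not prove Theorem~\ref{KS} (it is cited from \cite{KS}), so there is no in-paper proof to compare against line by line, but your argument is the natural non-stability analogue of the proof of Theorem~\ref{MainResult_2}. One small point worth making explicit in a full write-up: after concluding that the two Fourier transforms agree on $\mathbb{R}^n\setminus\{0\}$, the difference of the original distributions has Fourier transform supported at the origin, hence would be a polynomial; but $\|x\|_K^{-n+1+p}-\|x\|_L^{-n+1+p}$ is homogeneous of negative degree $-n+1+p$, so that polynomial is zero. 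Equivalently, one can invoke the injectivity of the operator $I_{1+p}$ on $C^\infty(S^{n-1})$, which follows from the nonvanishing of the eigenvalues $\lambda_m(n,1+p)$ in Lemma~\ref{eigen}.
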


The following is our stability result.

\begin{theorem}\label{MainResult_2}
Let $K$ and $L$ be convex bodies in $\mathbb{R}^n$ contained in a ball of radius $R$, and containing a ball of radius $r$, where both balls are centred at the origin. Let $-1<p<n-1$ be a non-integer, and $m$ be an integer greater than $p$. If $K$ and $L$ are $m$-smooth and
\begin{align*}
\sup_{\xi\in S^{n-1}} \Big| A_{K,\,\xi}^{(p)}(0) - A_{L,\,\xi}^{(p)}(0)\Big| \leq \varepsilon
\end{align*}
for some $0 < \varepsilon < 1$, then
\begin{align*}
\rho(K,L) \leq C(n,p,r,R)\,   \varepsilon^q \quad \mbox{where} \quad q =
	\begin{cases}
		\frac{2}{n+1} &\mbox{ if } n\leq 2p+2, \\
		\frac{4}{(n-2p)(n+1)} &\mbox{ if } n>2p+2 .
	\end{cases}
\end{align*}
Here, $C(n,p,r,R)>0$ are constants depending on the dimension, $p$, $r$, and $R$.
\end{theorem}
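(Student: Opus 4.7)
The plan is to use the Fourier analytic representation of fractional derivatives to reduce the hypothesis to a pointwise bound on the Fourier transform of a homogeneous function on the sphere, and then to invert via spherical harmonic analysis combined with the $m$-smoothness of the bodies. By a classical formula of Koldobsky (see \cite{Koldobsky}), for non-integer $p\in(-1,n-1)$ and any $m$-smooth convex body $K$ with $m>p$,
\begin{equation*}
A_{K,\xi}^{(p)}(0)\;=\;c(n,p)\,\bigl(\|\cdot\|_K^{-n+1+p}\bigr)^{\wedge}(\xi),\qquad \xi\in S^{n-1},
\end{equation*}
for some explicit nonzero constant $c(n,p)$, where $\|\cdot\|_K$ denotes the Minkowski functional of $K$ (so that $\|\xi\|_K=\rho_K(\xi)^{-1}$ for $\xi\in S^{n-1}$). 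Setting $h(x):=\|x\|_K^{-n+1+p}-\|x\|_L^{-n+1+p}$, the hypothesis immediately gives $\sup_{\xi\in S^{n-1}}|\widehat h(\xi)|\le C_1(n,p)\,\varepsilon$, and integration over $S^{n-1}$ yields $\|\widehat h\|_{L^2(S^{n-1})}\le C_2(n,p)\,\varepsilon$. Since $h|_{S^{n-1}}=\rho_K^{n-1-p}-\rho_L^{n-1-p}$ and $t\mapsto t^{n-1-p}$ is bi-Lipschitz on $[r,R]$, an $L^\infty$-bound on $h|_{S^{n-1}}$ will translate directly into the desired bound on $\rho(K,L)$ up to a loss depending only on $n,p,r,R$.

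Next I would decompose $h|_{S^{n-1}}=\sum_k h_k$ and $\widehat h|_{S^{n-1}}=\sum_k g_k$ into spherical harmonics of degree $k$. The Fourier transform of a homogeneous function preserves each harmonic degree and acts as a scalar multiplier $g_k=\lambda_k\,h_k$, where
\begin{equation*}
\lambda_k \;=\; i^{-k}\,2^{1+p}\pi^{n/2}\,\frac{\Gamma\bigl(\tfrac{k+1+p}{2}\bigr)}{\Gamma\bigl(\tfrac{k+n-1-p}{2}\bigr)}.
\end{equation*}
Stirling's asymptotics give $|\lambda_k|\asymp k^{-(n-2-2p)/2}$, and the non-integrality of $p$ ensures $\lambda_k\neq 0$ for every $k\ge 0$. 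Consequently $|\lambda_k|^{-1}$ is bounded when $n\le 2p+2$ and grows polynomially of order $(n-2-2p)/2$ when $n>2p+2$; writing $\beta:=\max(0,\,n-2-2p)$, dividing the Parseval identity for $\widehat h$ degree-by-degree yields
\begin{equation*}
\sum_{k\le N}\|h_k\|_{L^2(S^{n-1})}^2 \;\le\; C_3(n,p)\,N^{\beta}\,\varepsilon^2
\end{equation*}
for every cutoff $N\ge 1$.

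For the high-frequency tail I would invoke the $m$-smoothness of $K,L$ together with the two-sided bound $r\le\rho_K,\rho_L\le R$: the chain rule applied to $t\mapsto t^{n-1-p}$ produces a Sobolev estimate $\|h\|_{H^s(S^{n-1})}\le C_4(n,p,s,r,R)$ for each $s\le m$, and hence $\sum_{k>N}\|h_k\|_{L^2}^2\le C_4^2\,N^{-2s}$. Combining the two splits via the standard pointwise bound $\|Y_k\|_{L^\infty}\le C\,k^{(n-2)/2}\|Y_k\|_{L^2}$ for a spherical harmonic of degree $k$ and Cauchy--Schwarz will produce an estimate
\begin{equation*}
\|h\|_{L^\infty(S^{n-1})} \;\le\; C_5\bigl(N^{(n-1+\beta)/2}\,\varepsilon \;+\; N^{(n-1)/2-s}\bigr)
\end{equation*}
valid for every $s\in((n-1)/2,m]$. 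Optimising $N$ as a suitable negative power of $\varepsilon$ and treating the two regimes $\beta=0$ and $\beta=n-2-2p$ separately will deliver $\|h\|_{L^\infty}\le C_6\,\varepsilon^q$ with the exponent $q$ of the theorem; the bi-Lipschitz conversion from $h|_{S^{n-1}}$ back to $\rho_K-\rho_L$ then finishes the proof.

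The hardest part will be the precise interpolation bookkeeping: the choice of $s$ and the balance point $N$ must be arranged so that the loss $N^{\beta/2}$ in the low-frequency term, combined with the Sobolev embedding exponent $(n-1)/2$, produces exactly $q=2/(n+1)$ in the first regime and $q=4/((n-2p)(n+1))$ in the second. A secondary issue is that a $C^m$-bound on the radial function of a general convex body does not follow from $r,R$ alone; this will be handled, as indicated for the proof of Theorem~\ref{MainResult_1}, by approximating $K,L$ by smooth convex bodies $K_\delta,L_\delta$ obtained via a standard convolution regularisation (so that $\rho(K,K_\delta)$ and $\rho(L,L_\delta)$ can be made arbitrarily small while the $C^m$-norms remain controlled), applying the argument above to the approximations, and passing to the limit using the uniform closeness of radial functions.
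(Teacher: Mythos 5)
Your approach contains a genuine gap in the high-frequency estimate, and it stems from misdiagnosing what uniform regularity is actually available. You want the Sobolev bound $\|h\|_{H^s(S^{n-1})}\le C(n,p,s,r,R)$ for some $s$ up to $m$; working out your optimisation, the cut-off $N=\varepsilon^{-2/(\beta+2s)}$ produces $\|h\|_{L^\infty}\lesssim\varepsilon^{(2s-n+1)/(\beta+2s)}$, and matching this to the target exponent $q$ forces $s$ to be of order $(n+1)/2$ (exactly $(n+1)/2$ in the first regime $n\le 2p+2$). In particular $s>1$ for $n\ge 3$, and $s=1$ gives a nonpositive exponent and hence no estimate at all. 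But no such $H^s$ bound depending only on $n,p,r,R$ exists. The hypothesis that $K$ and $L$ are $m$-smooth is purely qualitative: it guarantees $\rho_K,\rho_L\in C^m(S^{n-1})$ but gives no control of $\|\rho_K\|_{C^m}$ in terms of $r$ and $R$. Your suggestion that the convolution approximants $K_\delta$ have $C^m$ norms ``controlled'' does not repair this: in the scheme of Lemma~\ref{approx} the $C^m$ norms of $\rho_{K_\delta}$ merely converge to $\|\rho_K\|_{C^m}$, which may be arbitrarily large for fixed $r,R$, so letting $\delta\to 0$ leaves a constant depending on the unavailable $C^m$ data of $K$ and $L$. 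The only uniform quantitative regularity the data $(n,r,R)$ furnish is first order: since $K^\circ,L^\circ\subset B_{1/r}$, one has $|\nabla_o h_{K^\circ}|\le 2/r$, hence $\|\nabla_o f\|_2\le C(n,p,r,R)$, and nothing beyond $H^1$.

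The paper's route differs precisely at this point. Instead of attempting an $L^\infty$ estimate by harmonic summation, Lemma~\ref{keylemma} first bounds $\|f\|_{L^2(S^{n-1})}$ and then converts $L^2$ to $L^\infty$ via Vitale's inequality (Theorem~\ref{Vitale}) applied to the polar bodies $K^\circ,L^\circ$. That conversion is a consequence of convexity alone, costs exactly the power $2/(n+1)$, and requires no smoothness. To bound $\|f\|_2$, the paper uses a H\"older interpolation on the condensed harmonic expansion, trading $\|I_{1+p} f\|_2$ (small by hypothesis) against $\sum_m m^2\|Q_m\|_2^2=\|\nabla_o f\|_2^2$ (bounded by the $H^1$ fact above); combined with the eigenvalue asymptotics $|\lambda_m|^{-4/(n-2p)}\lesssim m^2$, this produces the second exponent $4/((n-2p)(n+1))$. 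There is no free Sobolev index to optimise because only $H^1$ control is available; the role of $m$-smoothness is merely to make $A_{K,\xi}^{(p)}(0)$ well defined and to permit the approximation argument, not to supply a quantitative $H^m$ bound.

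A smaller point: the representation $A_{K,\xi}^{(p)}(0)=c(n,p)\,(\|\cdot\|_K^{-n+1+p})^\wedge(\xi)$ with a single constant is the formula for origin-symmetric $K$. For general $K$ one must use the two-term Ryabogin--Yaskin formula~(\ref{PS Func}), with $\cos(p\pi/2)$ on the even part and $\sin(p\pi/2)$ on the odd part of $\|\cdot\|_K^{-n+1+p}$; this matters because $I_{1+p}$ acts with different eigenvalues on even and odd harmonics. It is a repairable bookkeeping issue, but it has to be tracked correctly.
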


\begin{remark}
In the proof of Theorem \ref{MainResult_2}, we give the explicit dependency of $C(n,p,r,R)$ on $r$ and $R$. Furthermore, our second result remains true when $p$ is a non-integer greater than $n-1$. However, considering such values for $p$ would make our arguments less clear.
\end{remark}

\section{Preliminaries}

Throughout our paper, the constants
\begin{align*}
\kappa_n := \frac{\pi^\frac{n}{2}}{\Gamma\left( \frac{n}{2} + 1 \right) }
	\qquad \mbox{and} \qquad \omega_n := n\cdot\kappa_n
\end{align*}
give the volume and surface area of the unit Euclidean ball in $\mathbb{R}^n$, where $\Gamma$ denotes the Gamma function. Whenever we integrate over Borel subsets of the sphere $S^{n-1}$, we are using non-normalized {\it spherical measure}; that is, the $(n-1)$-dimensional Hausdorff measure on $\mathbb{R}^n$, scaled so that the measure of $S^{n-1}$ is $\omega_n$.

Let $K$ be a convex body in $\mathbb{R}^n$ containing the origin in its interior. The {\it maximal section function} of $K$ is defined by
\begin{align*}
m_K (\xi) = \max_{t\in\mathbb{R}} \mathrm{vol}_{n-1}(K\cap \{\xi^{\perp}+t\xi\}) = \max_{t\in\mathbb{R}} A_{K,\xi}(t) ,
	\qquad \xi\in S^{n-1} .
\end{align*}
Note that $m_K$ is simply the radial function for the cross-section body $CK$. For each $\xi\in S^{n-1}$, we let $t_K(\xi)\in\mathbb{R}$ be the closest to zero number such that
\begin{align*}
A_{K,\xi}(t_K(\xi)) = m_K(\xi) .
\end{align*}

Towards the proof of our first stability result, we use the formula
\begin{align}\label{BK formula}
\begin{split}
f_{K}(t) :&= \frac{1}{\omega_n} \int_{S^{n-1}} A_{K,\xi}(t) \, d\xi \\
&= \frac{ \Gamma\left( \frac{n}{2} \right)}{\sqrt{\pi} \, \Gamma\left( \frac{n-1}{2} \right) }
	\int_{K\cap \lbrace |x| \geq |t| \rbrace} \frac{1}{|x|} \left( 1 - \frac{t^2}{|x|^2} \right)^\frac{n-3}{2} dx ;
\end{split}
\end{align}
refer to Lemma 1.2 in \cite{BV} or Lemma 1 in \cite{BoK} for the proof.

The {\it Minkowski functional} of $K$ is defined by
\begin{align*}
\|x\|_K=\min \{a\ge 0: x \in aK \}, \qquad x\in \mathbb R^n.
\end{align*}
It easy to see that $\rho_K(\xi) = \|\xi\|_K^{-1}$ for $\xi\in S^{n-1}$. The latter also allows us to consider $\rho_K$ as a homogeneous degree $-1$ function on $\mathbb R^n\setminus\{0\}$. The {\it support function} of $K$ is defined by
\begin{align*}
h_K(x) = \sup_{y\in K} \langle x,y\rangle, \qquad x\in \mathbb R^n.
\end{align*}
The function $h_K$ is the Minkowski functional for the polar body $K^\circ$ associated with $K$. Given another convex body $L$ in $\mathbb{R}^n$, define
\begin{align*}
\delta_2(K,L) = \left( \int_{S^{n-1}} \left| h_K(\xi) - h_L(\xi) \right|^2 \, d\xi \right)^\frac{1}{2}
\end{align*}
and
\begin{align*}
\delta_\infty (K,L) = \sup_{\xi\in S^{n-1}} \left| h_K(\xi) - h_L(\xi) \right| .
\end{align*}
These functions are, respectively, the {\it $L^2$} and {\it Hausdorff metrics} for convex bodies in $\mathbb{bR}^n$. The following theorem, due to Vitale \cite{Vit1985}, relates these metrics; refer to Proposition 2.3.1 in \cite{Groemer} for the proof.

\begin{theorem}\label{Vitale}
Let $K$ and $L$ be convex bodies in $\mathbb{R}^n$, and let $D$ denote the diameter of $K\cup L$. Then
\begin{align*}
\frac{2 \kappa_{n-1} D^{1-n}}{n(n+1)}\, \delta_\infty(K,\,L)^{n+1} \leq \delta_2(K,\, L)^2 \leq \omega_n\, \delta_\infty(K,L)^2 .
\end{align*}
\end{theorem}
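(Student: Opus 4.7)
The upper bound $\delta_2(K,L)^2 \le \omega_n\,\delta_\infty(K,L)^2$ is immediate from the pointwise inequality $|h_K(\xi) - h_L(\xi)| \le \delta_\infty(K,L)$ on $S^{n-1}$ and the fact $|S^{n-1}| = \omega_n$. The substantive claim is the lower bound, and I would prove it by the classical Lipschitz-plus-cap-measure recipe: set $f := h_K - h_L$, fix $\xi_0 \in S^{n-1}$ with $|f(\xi_0)| = \delta_\infty(K,L)$, and work locally around $\xi_0$.

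First, a Lipschitz estimate. From $h_K(\xi) = \sup_{y \in K}\langle y,\xi\rangle$ and Cauchy--Schwarz, $|h_K(\xi) - h_K(\eta)| \le \|\xi - \eta\|\sup_{y \in K}\|y\|$, and since $0 \in K \cap L$ by the standing assumption, every $y \in K \cup L$ satisfies $\|y\| = \|y-0\| \le \operatorname{diam}(K \cup L) = D$. Hence $f$ is $2D$-Lipschitz on $S^{n-1}$ (in either the chord or the geodesic metric). Consequently, for any $\xi$ at geodesic distance $\phi$ from $\xi_0$ we have $|f(\xi)| \ge \delta_\infty(K,L) - 2D\phi$, which stays non-negative on the cap $C_\theta$ of geodesic radius $\theta := \delta_\infty(K,L)/(2D)$. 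Writing the spherical measure in geodesic polar coordinates about $\xi_0$,
$$\delta_2(K,L)^2 \;\ge\; \int_{C_\theta} |f(\xi)|^2\,d\xi \;=\; \omega_{n-1}\int_0^\theta \bigl(\delta_\infty(K,L) - 2D\phi\bigr)^2 \sin^{n-2}\phi \,d\phi.$$
Substituting $\phi = \theta s$ turns the integrand into $\delta_\infty(K,L)^2(1-s)^2\sin^{n-2}(\theta s)$; combining with $\omega_{n-1} = (n-1)\kappa_{n-1}$ and the Beta identity $\int_0^1 (1-s)^2 s^{n-2}\,ds = 2/[(n-1)n(n+1)]$ produces a lower bound of the advertised form $C_n\, D^{1-n}\delta_\infty(K,L)^{n+1}$.

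The main obstacle is pinning down the exact prefactor $2\kappa_{n-1}/[n(n+1)]$. The Beta computation already yields the combinatorial factor $1/[n(n+1)]$, but the remaining power of $2$ is sensitive to two steps: (i) how one treats the $\sin^{n-2}\phi$ weight (merely using $\sin\phi \le \phi$ or $\sin\phi \ge 2\phi/\pi$ is too wasteful for small $n$), and (ii) whether one can sharpen the Lipschitz constant of $f$ from $2D$ to $D$ by a more refined argument than the triangle inequality---for example, by placing $\xi_0$ at a common supporting hyperplane of $K$ and $L$, or by parametrizing the cap so that one support function is evaluated at the antipode of the other. Modulo this bookkeeping, every step is elementary, and the final constant is what one expects from the scaling $\delta_2^2 \sim (\text{peak})^2\times(\text{cap width})^{n-1}\sim \delta_\infty^2 \cdot(\delta_\infty/D)^{n-1}$.
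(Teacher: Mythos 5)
The paper does not contain its own proof of this statement; it cites Proposition~2.3.1 of Groemer's book, so there is no in-paper argument to compare with line by line. Your outline --- pointwise bound for the upper inequality, and a spherical-cap lower bound plus a Beta integral for the lower inequality --- is indeed the standard route and gives the correct scaling $\delta_2^2\gtrsim D^{1-n}\delta_\infty^{n+1}$ with the combinatorial factor $2/[(n-1)n(n+1)]$ coming out of the Beta computation. You are also right to flag that the $2D$-Lipschitz step is the weak link, but the proposal as written does not reach the stated constant, and the fix is more structural than tightening a Lipschitz constant.

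Here is the concrete problem. With Lipschitz constant $2D$ the cap radius is $\theta=\delta_\infty/(2D)$, and after the substitution $\phi=\theta s$ the factor $\theta^{n-1}$ contributes $(2D)^{1-n}$. So even if you replace $\sin^{n-2}(\theta s)$ by $(\theta s)^{n-2}$ with no loss, the best this version can deliver is $\frac{2\kappa_{n-1}}{n(n+1)}\,\delta_\infty^{n+1}(2D)^{1-n}$, which is short of the claim by a factor $2^{n-1}$; the genuine lower bound $\sin\phi\ge(2/\pi)\phi$ only makes this worse. The missing idea is to bound $f=h_K-h_L$ not pointwise in the cap but on antipodal pairs. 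Writing $\xi_\pm=\xi_0\cos\phi\pm v\sin\phi$ for $v\in\xi_0^{\perp}\cap S^{n-1}$, taking $y_0\in K$ with $\langle y_0,\xi_0\rangle=h_K(\xi_0)$, and using $h_K(\xi_\pm)\ge\langle y_0,\xi_\pm\rangle$ together with the sublinearity of $h_L$, one gets
\begin{align*}
f(\xi_+)+f(\xi_-)\;\ge\;2f(\xi_0)\cos\phi-\bigl(h_L(v)+h_L(-v)\bigr)\sin\phi\;\ge\;2\delta_\infty\cos\phi - D\sin\phi ,
\end{align*}
since $h_L(v)+h_L(-v)$ is the width of $L$ in direction $v$ and is at most $D$. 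Combined with $f(\xi_+)^2+f(\xi_-)^2\ge\tfrac12\bigl(f(\xi_+)+f(\xi_-)\bigr)^2$, this yields a lower bound for the average of $f^2$ over the sphere $\lbrace\xi_0\cos\phi+v\sin\phi:v\in\xi_0^\perp\cap S^{n-1}\rbrace$ with \emph{effective} Lipschitz constant $D/2$, i.e.\ cap radius $2\delta_\infty/D$ --- four times what your triangle-inequality estimate gives and exactly the gain needed to absorb the powers of two. This width-versus-diameter refinement, obtained by averaging over $\pm v$, is what your phrase ``parametrize the cap so that one support function is evaluated at the antipode of the other'' is groping toward, but it is not something the Cauchy--Schwarz/triangle-inequality Lipschitz estimate can reach, and without it the stated constant is out of range. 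The remaining bookkeeping (the $\sin^{n-2}\phi$ weight, the fact $\delta_\infty\le D$ which follows from $y_0-z\in K\cup L-(K\cup L)$ for any $z\in L$) is then elementary, as you anticipated, but this one step is essential.
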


Let $\alpha = (\alpha_1,\ldots,\alpha_n)$ be any $n$-tuple of non-negative integers. We will use the notation
\begin{align*}
[\alpha] := \sum_{j=1}^n \alpha_j
\end{align*}
to define the differential operator
\begin{align*}
\frac{\partial^{[\alpha]}}{\partial x^\alpha} := \frac{\partial^{[\alpha]}}{\partial x_1^{\alpha_1}\cdots\partial x_n^{\alpha_n}}.
\end{align*}

We let $\mathcal{S}(\mathbb{R}^n)$ denote the space of Schwartz test functions; that is, functions in $C^\infty(\mathbb{R}^n)$ for which all derivatives decay faster than any rational function. The Fourier transform of $\phi\in\mathcal{S}(\mathbb{R}^n)$ is a test function $\mathscr{F}\phi$ defined by
\begin{align*}
\mathscr{F}\phi (x) = \widehat{\phi}(x) = \int_{\mathbb{R}^n} \phi(y) e^{-i\langle x,y\rangle} \, dy, \qquad x\in\mathbb{R}^n .
\end{align*}
The continuous dual of $\mathcal{S}(\mathbb{R}^n)$ is denoted as $\mathcal{S}'(\mathbb{R}^n)$, and elements of $\mathcal{S}'(\mathbb{R}^n)$ are referred to as distributions. The action of $f\in\mathcal{S}'(\mathbb{R}^n)$ on a test function $\phi$ is denoted as $\langle f,\phi\rangle$. The Fourier transform of $f$ is a distribution $\widehat{f}$ defined by
\begin{align*}
\langle \widehat{f},\phi\rangle = \langle f, \widehat{\phi}\rangle , \qquad \phi\in\mathcal{S}(\mathbb{R}^n);
\end{align*}
$\widehat{f}$ is well-defined as a distribution because $\mathscr{F}:\mathcal{S}(\mathbb{R}^n)\rightarrow\mathcal{S}(\mathbb{R}^n)$ is a continuous and linear bijection.

For any $f\in C(S^{n-1})$ and $p\in\mathbb{C}$, the $-n+p$ homogeneous extension of $f$ is given by
\begin{align*}
f_p(x) = \vert x\vert^{-n+p}\, f\left(\frac{x}{\vert x\vert}\right), \qquad x\in\mathbb{R}^n\backslash \lbrace 0\rbrace.
\end{align*}
When $\mathcal{R}p>0$, $f_p$ is locally integrable on $\mathbb{R}^n$ with at most polynomial growth at infinity. In this case, $f_p$ is a distribution on $\mathcal{S}(\mathbb{R}^n)$ acting by integration, and we may consider its Fourier transform. Goodey, Yaskin, and Yaskina show in \cite{Good&Yask2009} that, for $f\in C^\infty (S^{n-1})$, the additional restriction $\mathcal{R}p<n$ ensures the action of $\widehat{f_p}$ is also by integration, with $\widehat{f_p}\in C^\infty ( \mathbb{R}^n\backslash\lbrace 0\rbrace)$.

We make extensive use of the mapping $I_p: C^\infty (S^{n-1})\rightarrow C^\infty ( S^{n-1})$ defined in \cite{Good&Yask2009}, which sends a function $f$ to the restriction of $\widehat{f_p}$ to $S^{n-1}$. For $0<\mathcal{R}p<n$ and $m\in\mathbb{Z}^{\geq 0}$, Goodey, Yaskin and Yaskina show $I_p$ has an eigenvalue $\lambda_m(n,p)$ whose eigenspace includes all spherical harmonics of degree $m$ and dimension $n$. These eigenvalues are given explicitly in the following lemma; refer to \cite{Good&Yask2009} for the proof.

\begin{lemma}\label{eigen}
If $0<\mathcal{R}p<n$, then the eigenvalues $\lambda_m(n,p)$ are given by
\begin{align*}
\lambda_m(n,p) = \frac{2^p\pi^\frac{n}{2}(-1)^\frac{m}{2}\Gamma\left(\frac{m+p}{2}\right)}{\Gamma\left( \frac{m+n-p}{2}\right)}
	\mbox{ \ if m is even},
\end{align*}
and
\begin{align*}
\lambda_m(n,p) = i\,\frac{2^p\pi^\frac{n}{2}(-1)^\frac{m-1}{2}\Gamma\left(\frac{m+p}{2}\right)}{\Gamma\left( \frac{m+n-p}{2}\right)}
	\mbox{ \ if m is odd}.
\end{align*}
\end{lemma}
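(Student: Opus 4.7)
The plan is to establish the Fourier transform identity
$$\widehat{Y_m(x/|x|)\,|x|^{-n+p}}(\xi) = \lambda_m(n,p)\, Y_m(\xi/|\xi|)\,|\xi|^{-p}$$
on $\mathbb{R}^n\setminus\{0\}$ for every spherical harmonic $Y_m$ of degree $m$, with the scalar $\lambda_m(n,p)$ as claimed. Evaluating at $|\xi|=1$ will then give exactly the eigenvalue statement for $I_p$ acting on $\mathcal{H}_m$, the space of spherical harmonics of degree $m$.

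First I would reduce to a scalar computation by equivariance. For $0<\mathcal{R}p<n$, the distribution $F_{Y_m,p}(x) = Y_m(x/|x|)\,|x|^{-n+p}$ is tempered and homogeneous of degree $-n+p$, so $\widehat{F_{Y_m,p}}$ is homogeneous of degree $-p$. The linear map $Y_m\mapsto \widehat{F_{Y_m,p}}\big|_{S^{n-1}}$ commutes with the $O(n)$-action because the Fourier transform does, and since $\mathcal{H}_m$ is an irreducible $O(n)$-representation, Schur's lemma forces $\widehat{F_{Y_m,p}}(\xi) = \lambda_m(n,p)\,Y_m(\xi/|\xi|)\,|\xi|^{-p}$ for a single scalar $\lambda_m(n,p)$ depending only on $m,n,p$.

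Next I would compute $\lambda_m(n,p)$ by pairing the identity with the Schwartz test function $\phi(\xi) = P_m(\xi)\,e^{-|\xi|^2/2}$, where $P_m(x) = |x|^m Y_m(x/|x|)$ is the associated solid harmonic polynomial. Direct polar-coordinate computation gives
$$\langle \widehat{F_{Y_m,p}},\phi\rangle = \lambda_m(n,p)\,\|Y_m\|_{L^2(S^{n-1})}^2\int_0^\infty r^{m-p+n-1}\,e^{-r^2/2}\,dr.$$
On the other hand $\langle \widehat{F_{Y_m,p}},\phi\rangle = \langle F_{Y_m,p},\widehat{\phi}\rangle$, and Hecke's identity gives $\widehat{\phi}(x) = c_m\,(2\pi)^{n/2}\,P_m(x)\,e^{-|x|^2/2}$ for a constant $c_m\in\{i^m,(-i)^m\}$ determined by the chosen Fourier convention, so this expression equals $c_m\,(2\pi)^{n/2}\,\|Y_m\|_{L^2(S^{n-1})}^2\int_0^\infty r^{m+p-1}\,e^{-r^2/2}\,dr$. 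Evaluating both radial integrals by $\int_0^\infty r^{s-1}\,e^{-r^2/2}\,dr = 2^{s/2-1}\,\Gamma(s/2)$, cancelling the common factor $\|Y_m\|_{L^2(S^{n-1})}^2$, and simplifying the powers of $2$ and $\pi$ yields
$$\lambda_m(n,p) = c_m\cdot 2^p\,\pi^{n/2}\cdot\frac{\Gamma\bigl((m+p)/2\bigr)}{\Gamma\bigl((m+n-p)/2\bigr)}.$$
The even/odd case split in the statement then arises from $c_m$, which equals the real number $(-1)^{m/2}$ when $m$ is even and the imaginary number $\pm i\,(-1)^{(m-1)/2}$ when $m$ is odd.

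The main obstacle is sign bookkeeping: one has to fix a Fourier convention and propagate the factors of $i$ consistently through Hecke's identity and the distributional pairing. A secondary technical point is justifying the Schur-type argument at the distributional level, which reduces to checking that $F_{Y_m,p}$ can be legitimately paired with Schwartz functions throughout $0<\mathcal{R}p<n$; this holds because $F_{Y_m,p}$ is locally integrable with integrable singularity at the origin and polynomial growth at infinity, so it is tempered in the full stated range.
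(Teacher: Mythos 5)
The paper does not prove this lemma at all; it cites Goodey, Yaskin, and Yaskina \cite{Good&Yask2009}, who obtain the eigenvalues by writing the restriction of $\widehat{f_p}$ to the sphere as an integral operator and invoking the Funk--Hecke theorem to reduce the multiplier on $\mathcal{H}_m^n$ to a one-dimensional integral. Your route is different and equally classical: it is the Bochner/Stein--Weiss derivation, using $O(n)$-equivariance plus Schur's lemma to reduce to a scalar, and Hecke's identity $\widehat{P_m e^{-|x|^2/2}} = (\mp i)^m (2\pi)^{n/2} P_m e^{-|\xi|^2/2}$ paired against the Gaussian-weighted solid harmonic to compute that scalar. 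The computation checks out: both radial integrals evaluate via $\int_0^\infty r^{s-1}e^{-r^2/2}\,dr = 2^{s/2-1}\Gamma(s/2)$ and the powers of $2$ and $\pi$ combine to $2^p\pi^{n/2}$ exactly as claimed, and the range $0<\mathcal{R}p<n$ is precisely what makes $F_{Y_m,p}$ locally integrable and its Fourier transform a function homogeneous of degree $-p$ away from the origin. Two small points deserve attention. First, Schur's lemma gives you a scalar only after you know the image of the intertwiner lies in $\mathcal{H}_m^n$; this follows because the projections onto $\mathcal{H}_k^n$ for $k\neq m$ are intertwiners between inequivalent irreducibles and hence vanish. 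Second, you should resolve the sign rather than leave $c_m\in\{i^m,(-i)^m\}$ open: with the paper's stated convention $\widehat{\phi}(x)=\int\phi(y)e^{-i\langle x,y\rangle}\,dy$ one gets $c_m=(-i)^m$, hence $\lambda_m = -i(-1)^{(m-1)/2}2^p\pi^{n/2}\Gamma\bigl(\frac{m+p}{2}\bigr)/\Gamma\bigl(\frac{m+n-p}{2}\bigr)$ for odd $m$, which differs by a sign from the lemma as quoted (the $+i$ corresponds to the $e^{+i\langle x,y\rangle}$ convention). This discrepancy is harmless for the rest of the paper, since only $|\lambda_m(n,p)|$ enters the stability estimates, but a complete proof should state which convention it is using and match the constant accordingly.
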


The {\it spherical gradient} of $f\in C(S^{n-1})$ is the restriction of $\nabla f\left(^x/_{|x|}\right)$ to $S^{n-1}$. It is denoted by $\nabla_o f$.

An extensive discussion on spherical harmonics is given in \cite{Groemer}. A {\it spherical harmonic} $Q$ of dimension $n$ is a harmonic and homogeneous polynomial in $n$ variables whose domain is restricted to $S^{n-1}$. We say $Q$ is of degree $m$ if the corresponding polynomial has degree $m$. The collection $\mathcal{H}_m^n$ of all spherical harmonics with dimension $n$ and degree $m$ is a finite dimensional Hilbert space with respect to the inner product for $L^2(S^{n-1})$. If, for each $m\in\mathbb{Z}^{\geq 0}$, $\mathcal{B}_m$ is an orthonormal basis for $\mathcal{H}_m^n$, then the union of all $\mathcal{B}_m$ is an orthonormal basis for $L^2(S^{n-1})$. Given $f\in L^2(S^{n-1})$, and defining
\begin{align*}
\sum_{Q\in\mathcal{B}_m} \langle f,Q\rangle\, Q  =: Q_m\in\mathcal{H}_m^n,
\end{align*}
we call $\sum_{m=0}^\infty Q_m$ the {\it condensed harmonic expansion} for $f$. The condensed harmonic expansion does not depend on the particular orthonormal bases chosen for each $\mathcal{H}_m^n$.

Let $m\in\mathbb{N}\cup\lbrace 0\rbrace$, and let $h:\mathbb{R}\rightarrow\mathbb{C}$ be an integrable function which is $m$-smooth in a neighbourhood of the origin. For $p\in\mathbb{C}\backslash\mathbb{Z}$ such that $-1<\mathcal{R} p < m$, we define the {\it fractional derivative} of the order $p$ of $h$ at zero as
\begin{align*}
h^{(p)}(0) = &\frac{1}{\Gamma(-p)} \int_0^1 t^{-1-p} \left( h(-t)
	- \sum_{k=0}^{m-1} \frac{ (-1)^k h^{(k)}(0) }{k!} t^k \right) \, dt \\
&+ \frac{1}{\Gamma(-p)} \int_1^\infty t^{-1-p} h(-t)\, dt
	+ \frac{1}{\Gamma(-p)} \sum_{k=0}^{m-1} \frac{ (-1)^k h^{(k)}(0) }{k!(k-p)} .
\end{align*}
Given the simple poles of the Gamma function, the fractional derivatives of $h$ at zero may be analytically extended to the integer values $0,\ldots, m-1$, and they will agree with the classical derivatives. 

Let $K$ be an infinitely smooth convex body. By Lemma 2.4 in \cite{Koldobsky}, $A_{K,\xi}$ is infinitely smooth in a neighbourhood of $t=0$ which is uniform with respect to $\xi\in S^{n-1}$. With the exception of a sign difference, the equality
\begin{align}\label{PS Func}
A_{K,\,\xi}^{(p)}(0) = & \frac{\cos\left(\frac{p\pi}{2}\right)}{2\pi(n-1-p)}\, \Big(\Vert x\Vert_K^{-n+1+p}
	+ \Vert -x\Vert_K^{-n+1+p} \Big)^\wedge (\xi) \\
&+ i\, \frac{\sin\left(\frac{p\pi}{2}\right)}{2\pi(n-1-p)}\, \Big(\Vert x\Vert_K^{-n+1+p}
	- \Vert -x\Vert_K^{-n+1+p} \Big)^\wedge (\xi), \nonumber
\end{align}
was proven by Ryabogin and Yaskin in \cite{Rya&Yask2012} for all $\xi\in S^{n-1}$ and $p\in\mathbb{C}$ such that $-1 < Re(p) < n-1$. The sign difference results from their use of $h(x)$ rather than $h(-x)$ in the definition of fractional derivatives.

\section{Auxiliary Results}

We first prove some auxiliary lemmas.

\begin{lemma}\label{approx}
Let $m$ be a non-negative integer. Let $K$ be an $m$-smooth convex body in $\mathbb{R}^n$ contained in a ball of radius $R$, and containing a ball of radius $r$, where both balls are centred at the origin. There exists a family $\lbrace K_\delta\rbrace_{0<\delta<1}$ of infinitely smooth convex bodies in $\mathbb{R}^n$ which approximate $K$ in the radial metric as $\delta$ approaches zero, with
\begin{align*}
B_{(1+\delta)^{-1}r}(0) \subset K_\delta \subset B_{(1-\delta)^{-1}R}(0) .
\end{align*}
Furthermore,
\begin{align*}
\lim_{\delta\rightarrow 0}\sup_{\xi\in S^{n-1}} \sup_{|t|\leq \frac{r}{4}} \big| A_{K,\xi}(t) - A_{K_\delta,\xi}(t)\big| = 0 ,
\end{align*}
and
\begin{align*}
\lim_{\delta\rightarrow 0}\sup_{\xi\in S^{n-1}} \left| A_{K_\delta,\xi}^{(p)}(0) - A_{K,\xi}^{(p)}(0)\right| = 0
\end{align*}
for every $p\in\mathbb{R}$, $-1< p\leq m$.
\end{lemma}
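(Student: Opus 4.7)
The plan is to mollify the support function of $K$ by averaging over the rotation group, inserting a small strictly convex Minkowski summand if needed to guarantee smoothness of the radial function, and then to handle the three convergence claims in turn. Explicitly, let $\{\phi_\delta\}_{0<\delta<1}$ be a family of smooth, nonnegative, compactly supported functions on $SO(n)$ with $\int\phi_\delta=1$ whose supports shrink to the identity as $\delta\to 0$, and set
\[
h_{K_\delta}(x)=\int_{SO(n)} h_{K}(\tau^{-1}x)\,\phi_\delta(\tau)\,d\tau,
\]
replacing $K$ on the right by a Minkowski sum $(1-\delta')K+\delta'' B$ (with $B$ the unit Euclidean ball and $\delta',\delta''$ small) if necessary to ensure positive Gaussian curvature of $K_\delta$. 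Since $h_K$ is sublinear and positively $1$-homogeneous, so is $h_{K_\delta}$, hence $K_\delta$ is a convex body; differentiation under the integral gives $h_{K_\delta}\in C^\infty(\mathbb{R}^n\setminus\{0\})$, and the ball summand ensures $\rho_{K_\delta}\in C^\infty(S^{n-1})$. The bound $h_K(\cdot)\ge r|\cdot|$ is preserved by rotation and convolution, and for $\delta',\delta''$ chosen appropriately small the claimed inclusions $B_{(1+\delta)^{-1}r}(0)\subset K_\delta\subset B_{(1-\delta)^{-1}R}(0)$ follow.

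As $\delta\to 0^+$, the weights $\phi_\delta$ approach a Dirac mass at the identity, so $h_{K_\delta}\to h_K$ uniformly on $S^{n-1}$; that is, $K_\delta\to K$ in the Hausdorff metric. Since all bodies are trapped uniformly between two concentric origin-balls, this is equivalent to $\rho_{K_\delta}\to\rho_K$ in the radial metric. For the section-function limit, observe that when $|t|\le r/4$ both $K\cap\{\xi^\perp+t\xi\}$ and $K_\delta\cap\{\xi^\perp+t\xi\}$ are $(n-1)$-dimensional convex bodies containing the common ball $\{x\in\xi^\perp+t\xi:\,|x-t\xi|\le\sqrt{r^2-r^2/16}\}$ and contained in an $(n-1)$-ball of radius $R$. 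Hausdorff convergence of $K_\delta$ to $K$ descends to these sections uniformly in $(\xi,t)$, and continuity of $\mathrm{vol}_{n-1}$ on such a uniformly-bounded family of convex bodies with a uniform inner-ball radius delivers the first vanishing limit.

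For the fractional derivative limit, the $SO(n)$-mollification commutes with tangential differentiation, so $h_{K_\delta}\to h_K$ in $C^m(S^{n-1})$; since $\rho_K\in C^m(S^{n-1})$ and $K\supseteq B_r(0)$, the polar duality map is a $C^m$-diffeomorphism in a neighbourhood of $K$, whence $\rho_{K_\delta}\to\rho_K$ in $C^m(S^{n-1})$ as well. By Lemma 2.4 in \cite{Koldobsky}, for $0\le k\le m$ the derivative $A^{(k)}_{K,\xi}(0)$ is a spherical integral over $\xi^\perp\cap S^{n-1}$ of a continuous function of $\rho_K$ and its tangential derivatives of order up to $k$, depending continuously on $\xi$; the $C^m$ radial convergence therefore yields $A^{(k)}_{K_\delta,\xi}(0)\to A^{(k)}_{K,\xi}(0)$ uniformly in $\xi$ for $0\le k\le m$. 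Inserting this into the three-term definition of the $p$-th fractional derivative applied to $A_{K,\xi}-A_{K_\delta,\xi}$, the Taylor remainder on $[0,1]$ vanishes to order $m>p$ at $0$ and absorbs the singularity $t^{-1-p}$; the tail is really an integral over the compact interval $[1,\,R/(1-\delta)]$ where the integrand is uniformly bounded and tends to $0$ uniformly in $\xi$ by Hausdorff convergence together with continuity of $\mathrm{vol}_{n-1}$ on proper sections; and the closed-form coefficient sum involves only the uniformly convergent $A^{(k)}_{K,\xi}(0)$'s. The main obstacle is propagating the $C^m$ convergence from support to radial functions, which is precisely why the hypotheses $\rho_K\in C^m$ and $B_r(0)\subset K\subset B_R(0)$ are essential.
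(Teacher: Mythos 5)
Your mollification is on the wrong side of polar duality, and this creates a genuine gap. The hypothesis of the lemma is that $\rho_K\in C^m(S^{n-1})$, i.e.\ $\|\cdot\|_K = h_{K^\circ}$ is $C^m$; it says nothing about $h_K$. A convex body with $C^m$ boundary can have flat faces, in which case $K^\circ$ has corners and $h_K$ fails even to be $C^1$. Your argument smooths $h_K$ by an $SO(n)$-convolution and then asserts ``$h_{K_\delta}\to h_K$ in $C^m(S^{n-1})$; since $\rho_K\in C^m$ and $K\supseteq B_r(0)$, the polar duality map is a $C^m$-diffeomorphism, whence $\rho_{K_\delta}\to\rho_K$ in $C^m$.'' Neither claim is justified: $h_K$ need not be in $C^m$, and the hypotheses do not make polar duality a $C^m$-diffeomorphism (that requires positive curvature, not merely a $C^m$ radial function and an inner ball). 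The paper sidesteps this entirely by mollifying the Minkowski functional directly, setting $\|x\|_{K_\delta}=\int_{\mathbb R^n}\|x+|x|z\|_K\,\phi_\delta(|z|)\,dz$ (Schneider, Theorem 3.3.1). From that integral formula, uniform continuity of $\partial^{[\alpha]}\|x\|_K$ on a compact annulus gives $C^m(S^{n-1})$ convergence of $\|\cdot\|_{K_\delta}$, and hence of $\rho_{K_\delta}$, using only the stated hypothesis. Your construction should be applied to $\|\cdot\|_K$ (equivalently $\rho_K$), not to $h_K$.

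Two further points are glossed over. First, the description of $A^{(k)}_{K,\xi}(0)$ as ``a spherical integral over $\xi^\perp\cap S^{n-1}$ of a continuous function of $\rho_K$ and its tangential derivatives'' is not what Lemma 2.4 of Koldobsky provides; the paper has to redo the implicit function theorem argument from that lemma (on $F(t,y)=y^2+t^2-\rho_{K,\theta}^2(\arctan(t/y))$) to obtain a uniform interval $(-\lambda,\lambda)$ of $m$-smoothness and explicit expressions for the $t$-derivatives of $\rho_{K\cap H_t}(\theta)$, and only then does $C^m$ convergence of $\rho_{K_\delta}$ transfer to uniform convergence of $A^{(k)}_{K_\delta,\xi}$ near $0$. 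Second, for the tail $\int_\lambda^\infty t^{-1-p}A_{K,\xi}(-t)\,dt$ your appeal to ``continuity of $\mathrm{vol}_{n-1}$ on proper sections'' does not obviously give a bound uniform in $\xi$, because for $t$ near the width of $K$ the sections degenerate. The paper instead converts the tail to an integral over $S^{n-1}$ against $\langle\eta,-\xi\rangle$ and the radial function, and isolates the problematic contribution to the symmetric difference $B_K(\xi)\,\Delta\,B_{K_\delta}(\xi)$, whose spherical measure it shows tends to zero uniformly in $\xi$ by an explicit angle estimate. You would need a comparable quantitative argument here.
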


\begin{proof}
For each $0<\delta<1$, let $\phi_\delta: [0,\infty)\rightarrow [0,\infty)$ be a $C^\infty$ function with support contained in $[\delta/2,\delta]$, and
\begin{align*}
\int_{\mathbb{R}^n} \phi_\delta\big(|z|\big)\, dz = 1.
\end{align*}
It follows from Theorem 3.3.1 in \cite{Schneider} that there is a family $\lbrace K_\delta\rbrace_{0<\delta<1}$ of $C^\infty$ convex bodies in $\mathbb{R}^n$ such that
\begin{align*}
\Vert x\Vert_{K_\delta} = \int_{\mathbb{R}^n} \big\Vert x+|x|z\big\Vert_K \phi_\delta\big(|z|\big)\, dz,
\end{align*}
and
\begin{align*}
\lim_{\delta\rightarrow 0} \sup_{\xi\in S^{n-1}} \big| \Vert\xi\Vert_{K_\delta} - \Vert\xi\Vert_K \big| = 0 .
\end{align*}
For each $\xi\in S^{n-1}$ and $z\in\mathbb{R}^n$ with $|z|\leq\delta$, we have
\begin{align*}
\big\Vert \xi+|\xi|z\big\Vert_K = \Vert \xi + z\Vert_K = \Vert \lambda\eta\Vert_K = \lambda \Vert\eta\Vert_K
\end{align*}
for some $\eta\in S^{n-1}$ and $0< 1-\delta \leq \lambda\leq 1+\delta$. It then follows from the support of $\phi_\delta$ and the inequality $R^{-1}\leq \Vert\eta\Vert_K\leq r^{-1}$ that
\begin{align*}
\Vert\xi\Vert_{K_\delta} = \int_{\mathbb{R}^n} \Vert \xi + z\Vert_K \phi_\delta\big( |z|\big)\, dz \leq (1+\delta) r^{-1}
\end{align*}
and
\begin{align*}
\Vert\xi\Vert_{K_\delta} = \int_{\mathbb{R}^n} \Vert \xi + z\Vert_K \phi_\delta\big( |z|\big)\, dz \geq (1-\delta) R^{-1},
\end{align*}
which gives
\begin{align*}
B_{(1+\delta)^{-1}r}(0) \subset K_\delta \subset B_{(1-\delta)^{-1}R}(0).
\end{align*}
This containment, with the limit of the difference of Minkowski functionals above, implies
\begin{align}\label{radial}
\lim_{\delta\rightarrow 0} \sup_{\xi\in S^{n-1}} \big| \rho_{K_\delta}(\xi) - \rho_K(\xi)\big| = 0 .
\end{align}
Therefore, $\lbrace K_\delta\rbrace_{0<\delta<1}$ approximate $K$ with respect to the radial metric.

Furthermore, the radial functions $\lbrace \rho_{K_\delta} \rbrace_{0<\delta<1}$ approximate $\rho_K$ in $C^m(S^{n-1})$. Let $\alpha = (\alpha_1,\ldots,\alpha_n)$ be any $n$-tuple of non-negative integers such that $1\leq [\alpha]\leq m$, and consider the function
\begin{align*}
f(y,z) := \frac{\partial^{[\alpha]}}{\partial x^\alpha} \big\Vert x+|x|z\big\Vert_K\Big|_{x=y} .
\end{align*}
Observe that $f$ is uniformly continuous on
\begin{align*}
\left\lbrace y\in\mathbb{R}^n, 2^{-1} \leq |y|\leq 2 \right\rbrace
	\times \left\lbrace z\in\mathbb{R}^n, |z|\leq 2^{-1} \right\rbrace
\end{align*}
since $K$ is $m$-smooth. Therefore, we have
\begin{align*}
\frac{\partial^{[\alpha]}}{\partial x^\alpha} \big( \Vert x\Vert_{K_\delta} - \Vert x\Vert_K\big) \Big|_{x=\xi}
	= \int_{\mathbb{R}^n} \frac{\partial^{[\alpha]}}{\partial x^\alpha} \Big( \big\Vert x+|x|z\big\Vert_K
	- \Vert x\Vert_K \Big)\Big|_{x=\xi} \phi_\delta\big(|z|\big)\, dz
\end{align*}
for all $\xi\in S^{n-1}$ and $\delta<1/2$, which implies
\begin{align*}
\sup_{\xi\in S^{n-1}} \left| \frac{\partial^{[\alpha]}}{\partial x^\alpha}
	\big( \Vert x\Vert_{K_\delta} - \Vert x\Vert_K\big) \Big|_{x=\xi} \right|
	\leq \sup_{\xi\in S^{n-1}} \sup_{|z|<\delta} \big| f(\xi,z) - f(\xi,0) \big| .
\end{align*}
Noting that $\big| (\xi,z) - (\xi,0)\big| = |z|<\delta$, the uniform continuity of $f$ then implies
\begin{align}\label{Mink deriv}
\lim_{\delta\rightarrow 0} \sup_{\xi\in S^{n-1}} \left| \frac{\partial^{[\alpha]}}{\partial x^\alpha}
	\big( \Vert x\Vert_{K_\delta} - \Vert x\Vert_K\big) \Big|_{x=\xi} \right| = 0 .
\end{align}
It follows from the relation $\rho_K(x) = \Vert x\Vert_K^{-1}$ that $\frac{\partial^{[\alpha]}}{\partial x^\alpha} \rho_K \big|_{x=\xi}$ may be expressed as a finite linear combination of terms of the form
\begin{align*}
\rho_K^{d+1}(\xi) \prod_{j=0}^d \frac{\partial^{[\beta_j]}}{\partial x^{\beta_j}} \Vert x\Vert_K \Big|_{x=\xi},
\end{align*}
where $d\in\mathbb{Z}^{\geq 0}$, and each $\beta_j$ is an $n$-tuple of non-negative integers such that $[\beta_j]\geq 1$ and
$[\alpha] = \sum_{j=0}^d [\beta_j]$. Of course, $\frac{\partial^{[\alpha]}}{\partial x^\alpha} \rho_{K_\delta}\big|_{x=\xi}$ may be expressed similarly. Equations (\ref{radial}) and (\ref{Mink deriv}) then imply
\begin{align}\label{radial deriv}
\lim_{\delta\rightarrow 0} \sup_{\xi\in S^{n-1}} \left| \frac{\partial^{[\alpha]}}{\partial x^\alpha}
	\big( \rho_{K_\delta} - \rho_K\big) \Big|_{x=\xi} \right| = 0,
\end{align}
once we note that $\rho_K$ and the partial derivatives of $\Vert x\Vert_K$, up to order $m$, are bounded on $S^{n-1}$.

Our next step is to uniformly approximate the parallel section function $A_{K,\xi}$.  Fix $\xi\in S^{n-1}$, and define the hyperplane
\begin{align*}
H_t = \xi^\perp + t\xi
\end{align*}
for any $t\in\mathbb{R}$ such that $|t|<r$. Let $S^{n-2}$ denote the Euclidean sphere in $H_t$ centred at $t\xi$, and let $\rho_{K\cap H_t}$ denote the radial function for $K\cap H_t$ with respect to $t\xi$ on $S^{n-2}$. Then, for $|t|<r$,
\begin{align}\label{int exp}
A_{K,\xi}(t) = \frac{1}{n-1} \int_{S^{n-2}} \rho_{K\cap H_t}^{n-1}(\theta)\, d\theta .
\end{align}
For $|t|<r/2$ and $0<\delta<1$, $A_{K_\delta,\xi}(t)$ may be expressed similarly. Fixing $\theta\in S^{n-2}$, and with angles $\alpha$ and $\beta$ as in Figure \ref{KcapHt}, we have
\begin{align*}
\big| \rho_{K\cap H_t}(\theta) - \rho_{K_\delta\cap H_t}(\theta) \big|
	\leq \frac{\sin\beta}{\sin\alpha} \big| \rho_K(\eta_1) - \rho_{K_\delta}(\eta_1) \big| .
\end{align*}
By restricting to $|t|\leq r/4$, $\alpha$ may be bounded away from zero and $\pi$. Indeed, if $\alpha < \pi/2$, then
\begin{align*}
\tan \alpha \geq \frac{r/2 - |t|}{R} \geq \frac{r}{4R} ,
\end{align*}
and if $\alpha > \pi/2$, then
\begin{align*}
\tan ( \pi-\alpha ) \geq \frac{r/2 + |t|}{R} \geq \frac{r}{2R} .
\end{align*}
Therefore
\begin{align*}
0 < \arctan\left(\frac{r}{4R}\right) \leq \alpha \leq \pi - \arctan\left(\frac{r}{4R}\right) < \pi.
\end{align*}
We now have
\begin{align}\label{rhoKcapHt}
\big| \rho_{K\cap H_t}(\theta) - \rho_{K_\delta\cap H_t}(\theta) \big|
	\leq \frac{1}{\sin\left(\arctan\left(\frac{r}{4R}\right)\right)}
	\sup_{\eta\in S^{n-1}} \big| \rho_K(\eta) - \rho_{K_\delta}(\eta) \big| ,
\end{align}
where the upper bound is independent of $\xi\in S^{n-1}$, $t$ with $|t|\leq r/4$, and $\theta\in S^{n-2}$. This inequality, the integral expression (\ref{int exp}), and equation (\ref{radial}) imply
\begin{align*}
\lim_{\delta\rightarrow 0}\sup_{\xi\in S^{n-1}} \sup_{|t|\leq \frac{r}{4}} \big| A_{K,\xi}(t) - A_{K_\delta,\xi}(t)\big| = 0 .
\end{align*}

\begin{figure}
\center \includegraphics[scale=0.75]{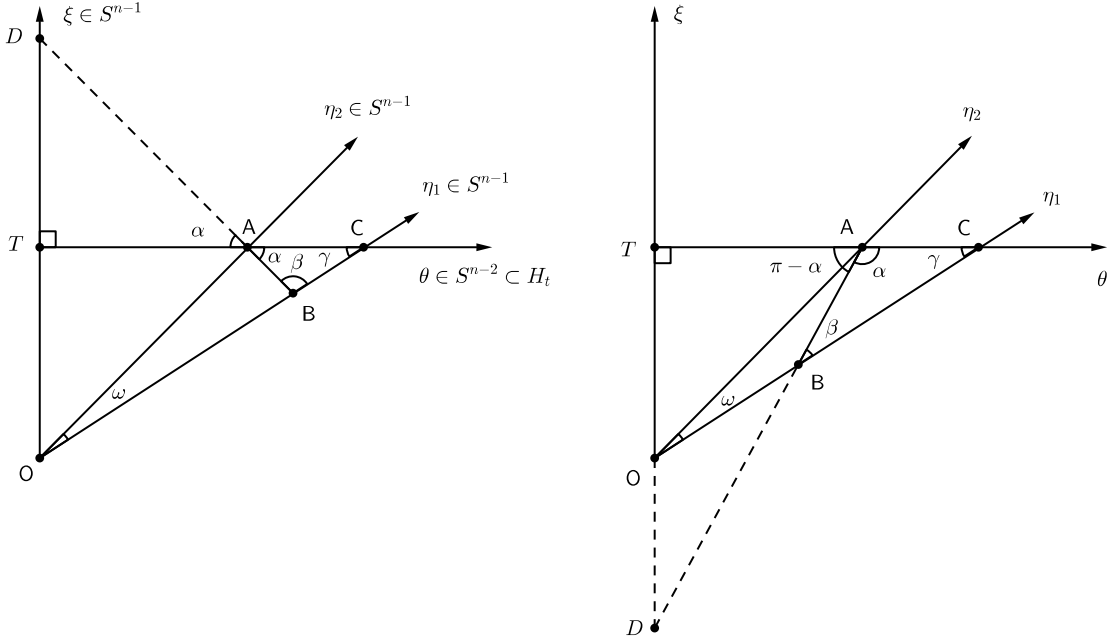}
\caption{ The diagrams represent two extremes: when the angle $\alpha$ is small $(\alpha<\pi/2)$, and when it is large $(\alpha>\pi/2)$. The point $O$ represents the origin in $\mathbb{R}^n$, and $\left|\overline{OT}\right| = t$ where $0\leq t\leq r/4$. The points $A$ and $C$ are the boundary points for $K$ and $K_\delta$ in the direction $\theta$, with two obvious possibilities: either $\left|\overline{TA}\right| = \rho_{K\cap H_t}(\theta)$ and $\left|\overline{TC}\right| = \rho_{K_\delta\cap H_t}(\theta)$, or the opposite. The point $B$ is a boundary point for the same convex body as $A$, but in the direction $\eta_1$. The point $D$ lies outside of the convex body for which $A$ and $B$ are boundary points.}
\label{KcapHt}
\end{figure}

Lemma (2.4) in \cite{Koldobsky} establishes the existence of a small neighbourhood of $t=0$, independent of $\xi\in S^{n-1}$, on which $A_{K,\xi}$ is $m$-smooth. The following is an elaboration of Koldobsky's proof, so that we may uniformly approximate the derivatives of $A_{K,\xi}$. Again fix $\xi\in S^{n-1}$, and fix $\theta\in S^{n-2}\subset H_t$. Let $\rho_{K,\theta}$ denote the $m$-smooth restriction of $\rho_K$ to the two dimensional plane spanned by $\xi$ and $\theta$, and consider $\rho_{K,\theta}$ as a function on $[0,2\pi]$, where the angle is measured from the positive $\theta$-axis. A right triangle then gives the equation
\begin{align*}
\rho_{K\cap H_t}^2(\theta) + t^2 = \rho_{K,\theta}^2\left( \arctan\left(\frac{t}{\rho_{K\cap H_t}(\theta)}\right)\right) ,
\end{align*}
which we can use to implicitly differentiate $y(t):=\rho_{K\cap H_t}(\theta)$ as a function of $t$. Indeed,
\begin{align*}
F(t,y) := y^2 + t^2 - \rho_{K,\theta}^2\left( \arctan\left(\frac{t}{y}\right)\right)
\end{align*}
is differentiable away from $y=0$, with
\begin{align*}
F_y(t,y) = 2y + \frac{2t}{y^2+t^2}\rho_{K,\theta}\left(\arctan\left(\frac{t}{y}\right)\right)
	\rho_{K,\theta}'\left(\arctan\left(\frac{t}{y}\right)\right) .
\end{align*}
The containment $B_r(0)\subset K\subset B_R(0)$ implies $\rho_{K,\theta}$ is bounded above on $S^{n-1}$ by $R$, and
\begin{align*}
\rho_{K\cap H_t}(\theta) \geq \frac{ \sqrt{15} \, r }{4}
\end{align*}
for $|t|\leq r/4$. If
\begin{align*}
M = 1 + \sup_{\xi\in S^{n-1}} \big| \nabla_o \rho_K(\xi) \big| < \infty ,
\end{align*}
and $\lambda\in\mathbb{R}$ is a constant such that
\begin{align*}
0 < \lambda < \min\left\lbrace \frac{15\sqrt{15} \, r^3}{128 RM}, \frac{r}{4} \right\rbrace ,
\end{align*}
then
\begin{align*}
\Big| F_y\big( t, \rho_{K\cap H_t}(\theta) \big) \Big| > \frac{ \sqrt{15} \, r }{4}
\end{align*}
for $|t|\leq\lambda$. Therefore, by the Implicit Function Theorem, $y(t) = \rho_{K\cap H_t}(\theta)$ is differentiable on $(-\lambda,\lambda)$, with
\begin{align*}
y'(t) = \frac{ \rho_{K,\theta}\left(\arctan\left(\frac{t}{y}\right)\right)
	\rho'_{K,\theta}\left(\arctan\left(\frac{t}{y}\right)\right) \big( y^2+t^2\big)^{-1} y - t }
	{ y + t \rho_{K,\theta}\left(\arctan\left(\frac{t}{y}\right)\right)
	\rho'_{K,\theta}\left(\arctan\left(\frac{t}{y}\right)\right) \big( y^2+t^2\big)^{-1} } .
\end{align*}
Recursion shows that $\rho_{K\cap H_t}(\theta)$ is $m$-smooth on $(-\lambda,\lambda)$, independent of $\xi\in S^{n-1}$ and $\theta\in S^{n-2}$. It follows from the integral expression (\ref{int exp}) that $A_{K,\xi}$ is $m$-smooth on $(-\lambda,\lambda)$ for every $\xi\in S^{n-1}$. This argument also shows that $A_{K_\delta,\xi}$ is $m$-smooth on the same interval, for $\delta>0$ small enough. Using the resulting expressions for the derivatives of $A_{K,\xi}$ and $A_{K_\delta,\xi}$, and applying equations (\ref{radial}), (\ref{radial deriv}), and the inequality (\ref{rhoKcapHt}), we have
\begin{align*}
\lim_{\delta\rightarrow 0} \sup_{\xi\in S^{n-1}} \sup_{|t|\leq\lambda}
	\left| A_{K,\xi}^{(k)}(t) - A_{K_\delta,\xi}^{(k)}(t)\right| = 0
\end{align*}
for $k = 1,\ldots,m$.

Finally, for any $p\in\mathbb{R}$ such that $-1<p< m$ and $p\neq 0,1,\ldots, m-1$, we will uniformly approximate $A_{K,\xi}^{(p)}(0)$. With $\lambda>0$ as chosen above, we have
\begin{align*}
A_{K,\xi}^{(p)}(0) = &\frac{1}{\Gamma(-p)} \int_0^\lambda t^{-1-p} \left( A_{K,\xi}(-t)
	- \sum_{k=0}^{m-1} \frac{(-1)^k A_{K,\xi}^{(k)}(0)}{k!} t^k \right)\, dt \\
&+ \frac{1}{\Gamma(-p)} \int_\lambda^\infty t^{-1-p} A_{K,\xi}(-t) \, dt
	+ \frac{1}{\Gamma(-p)} \sum_{k=0}^{m-1} \frac{(-1)^k \lambda^{k-p} A_{K,\xi}^{(k)}(0)}{k!(k-p)} .
\end{align*}
The first integral in this equation can be rewritten as
\begin{align*}
\int_0^\lambda t^{-1-p} \int_0^t \frac{A_{K,\xi}^{(m)}(-z)}{(m-1)!} (t-z)^{m-1}\, dz\, dt ,
\end{align*}
using the integral form of the remainder in Taylor's Theorem. We also have
\begin{align*}
&\int_\lambda^\infty t^{-1-p} A_{K,\xi}(-t) \, dt \\
&= \int_{K\cap \lbrace \langle x,-\xi\rangle \geq\lambda\rbrace} \langle x,-\xi\rangle^{-1-p}\, dx \\
&= \int_{B_K(\xi)} \langle\eta,-\xi\rangle^{-1-p}
	\int_{\lambda\langle\eta,-\xi\rangle^{-1}}^{\rho_K(\eta)} r^{n-2-p}\, dr\,d\eta \\
&= \frac{1}{n-1-p} \int_{B_K(\xi)} \Big( \langle\eta,-\xi\rangle^{-1-p} \rho_K^{n-1-p}(\eta)
	- \lambda^{n-1-p} \langle\eta,-\xi\rangle^{-n} \Big) \, d\eta ,
\end{align*}
where
\begin{align*}
B_K(\xi) = \Big\lbrace \eta\in S^{n-1} \Big|
	\langle \eta,\xi\rangle<0 \mbox{ and } \rho_K(\eta) \geq \lambda\langle\eta,-\xi\rangle^{-1}\Big\rbrace .
\end{align*}
Therefore, with the set $B_{K_\delta}(\xi)$ defined similarly, we have
\begin{align}
&\left| A_{K,\xi}^{(p)}(0) - A_{K_\delta,\xi}^{(p)}(0) \right| \cdot \big| \Gamma(-p) \big| \nonumber \\
&\leq \frac{1}{(m-1)!} \left( \sup_{|z|\leq\lambda} \left| A_{K,\xi}^{(m)}(z) - A_{K_\delta,\xi}^{(m)}(z) \right| \right)
	\int_0^\lambda \int_0^t t^{-1-p} (t-z)^{m-1} \, dz\, dt \label{first} \\
&\quad + \left( \sup_{\eta\in S^{n-1}} \left| \rho_K^{n-1-p}(\eta) - \rho_{K_\delta}^{n-1-p}(\eta) \right| \right)
	\int_{B_K(\xi)\cap B_{K_\delta}(\xi)} \frac{ \langle\eta,-\xi\rangle^{-1-p} }{ |n-1-p| } \, d\eta \label{second} \\
&\quad + \int_{B_K(\xi) \setminus B_{K_\delta}(\xi)} \left| \frac{ \langle\eta,-\xi\rangle^{-1-p} \rho_K^{n-1-p}(\eta)
	- \lambda^{n-1-p} \langle\eta,-\xi\rangle^{-n} }{n-1-p} \right| \, d\eta \label{third} \\
&\quad + \int_{B_{K_\delta}(\xi) \setminus B_K(\xi)} \left| \frac{ \langle\eta,-\xi\rangle^{-1-p} \rho_{K_\delta}^{n-1-p}(\eta)
	- \lambda^{n-1-p} \langle\eta,-\xi\rangle^{-n} }{n-1-p} \right| \, d\eta \label{fourth} \\
&\quad + \sum_{k=0}^{m-1} \frac{\lambda^{k-p}}{k! |k-p|} \left| A_{K,\xi}^{(k)}(0) - A_{K_\delta,\xi}^{(k)}(0) \right| \nonumber ,
\end{align}
%where $B(\xi,\delta)$ is the union of
%\begin{align*}
%\Big\lbrace \eta\in S^{n-1} \Big| \rho_K(\eta) \geq \lambda \langle\eta, -\xi\rangle^{-1} > \rho_{K_\delta}(\eta) \Big\rbrace
%\end{align*}
%and
%\begin{align*}
%\Big\lbrace \eta\in S^{n-1} \Big| \rho_{K_\delta}(\eta) \geq \lambda \langle\eta, -\xi\rangle^{-1} > \rho_K(\eta) \Big\rbrace .
%\end{align*}
%&\quad + \frac{ \left(\lambda^{-1} R\right)^{1+p} \omega_n}{|n-1-p|} \sup_{\eta\in S^{n-1}}
%	\left| \rho_K^{n-1-p}(\eta) - \rho_{K_\delta}^{n-1-p}(\eta) \right| \\
%&\quad + \frac{ (2R)^{1+p} \max\left\lbrace (2R)^{n-1-p}, \left( 2^{-1}r\right)^{n-1-p} \right\rbrace + (2R)^n }{|n-1-p|\, \lambda^{1+p}}
%	\int_{S^{n-1}} \chi_{B(\xi,\delta)} \, d\eta \\
for $\delta>0$ small enough. The integrals in expressions (\ref{first}) and (\ref{second}) are finite, with
\begin{align*}
\int_0^\lambda \int_0^t t^{-1-p} (t-z)^{m-1} \, dz\, dt = \frac{\lambda^{m-p}}{m(m-p)} ,
\end{align*}
since $p$ is a non-integer less than $m$, and
\begin{align*}
\int_{B_K(\xi)\cap B_{K_\delta}(\xi)} \langle\eta,-\xi\rangle^{-1-p} \, d\eta
	\leq \left( \frac{R}{\lambda} \right)^{1+p} \omega_n .
\end{align*}
Furthermore, the integrands in expression (\ref{third}) and (\ref{fourth}) are bounded above by
\begin{align*}
\left(\frac{2R}{\lambda}\right)^{1+p} (2R)^{n-1-p} + \lambda^{n-1-p}\left(\frac{2R}{\lambda}\right)^n \qquad \mbox{ if } p<n-1 ,
\end{align*}
and
\begin{align*}
\left(\frac{2R}{\lambda}\right)^{1+p} \left(\frac{r}{2}\right)^{n-1-p} + \lambda^{n-1-p}\left(\frac{2R}{\lambda}\right)^n
	\qquad \mbox{ if } p>n-1 ,
\end{align*}
noting that $B_{r/2}(0)\subset K_\delta\subset B_{2R}(0)$ for $\delta<1/2$.

It is now sufficient to prove
\begin{align*}
\lim_{\delta\rightarrow 0} \sup_{\xi\in S^{n-1}} \int_{S^{n-1}} \chi_{ B(\xi,\delta) } \, d\eta = 0 ,
\end{align*}
where
\begin{align*}
&B(\xi,\delta) = B_K(\xi) \Delta B_{K_\delta}(\xi) \\
&= \left\lbrace \eta\in S^{n-1} \, \bigg| \, \rho_K(\eta) \geq \frac{\lambda}{\langle\eta, -\xi\rangle} > \rho_{K_\delta}(\eta)
	\mbox{ or } \rho_{K_\delta}(\eta) \geq \frac{\lambda}{\langle\eta, -\xi\rangle} > \rho_K(\eta) \right\rbrace .
\end{align*}
We will prove the equivalent statement
\begin{align*}
\lim_{\delta\rightarrow 0} \sup_{\xi\in S^{n-1}} \int_{S^{n-1}} \chi_{ B(-\xi,\delta) } \, d\eta = 0 ,
\end{align*}
where the sign of $\xi$ has changed, so that we may use Figure \ref{KcapHt}.

Towards this end, fix any $\theta\in S^{n-2}$, and consider Figure \ref{KcapHt} specifically when $t = \lambda$. In this case,
\begin{align*}
\left|\overline{OA}\right| = \rho_K(\eta_2) = \lambda\langle\eta_2,\xi\rangle^{-1} \mbox{ and }
	\left|\overline{OC}\right| = \rho_{K_\delta}(\eta_1) = \lambda\langle\eta_1,\xi\rangle^{-1}
\end{align*}
or
\begin{align*}
\left|\overline{OC}\right| = \rho_K(\eta_2) = \lambda\langle\eta_2,\xi\rangle^{-1} \mbox{ and }
	\left|\overline{OA}\right| = \rho_{K_\delta}(\eta_1) = \lambda\langle\eta_1,\xi\rangle^{-1} .
\end{align*}
Any $\eta\in B(-\xi,\delta)$ lying in the right half-plane spanned by $\xi$ and $\theta$ will lie between $\eta_1$ and $\eta_2$. Furthermore, the angle $\omega$ converges to zero as $\delta$ approaches zero, uniformly with respect to $\xi\in S^{n-1}$ and $\theta\in S^{n-2}$. Indeed, we have
\begin{align*}
0\leq \sin\omega \leq \frac{2 \, \sin\beta \, \sin\gamma}{r \, \sin\alpha} \big| \rho_K(\eta_1) - \rho_{K_\delta}(\eta_1) \big| ,
\end{align*}
using the fact that both $K$ and $K_\delta$ contain a ball of radius $r/2$, and with $\sin\alpha$ uniformly bounded away from zero as before. It follows that the spherical measure of $B(-\xi,\delta)$ converges to zero as $\delta$ approaches zero, uniformly with respect to $\xi\in S^{n-1}$.
\end{proof}

%%%%%%%%%%%%%%%%%%%%%%%%%%%%%%%%%%%%%%%%%%%%%%%%%%%%%%%%%%%%%%%%%%%%%%%%%%%%%%%%%%%%%%%%%%%%%%%%%

\begin{lemma}\label{Lipschitz}
Let $K\subset\mathbb{R}^n$ be a convex body contained in a ball of radius $R$, and containing a ball of radius $r$, where both balls are centred at the origin. If
\begin{align*}
L(n) = 8 (n-1) \pi^\frac{n-1}{2} \left[ \Gamma\left( \frac{n+1}{2} \right) \right]^{-1} ,
\end{align*}
then
\begin{align*}
\left| A_{K,\xi}(t) - A_{K,\xi}(s) \right| \leq L(n) \, R^{n-1} \, r^{-1} \, |t-s|
\end{align*}
for all $s,t\in [-r/2,r/2]$ and $\xi\in S^{n-1}$.
\end{lemma}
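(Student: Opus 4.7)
The plan is to derive the Lipschitz bound from the concavity of $A_{K,\xi}^{1/(n-1)}$ given by the Brunn--Minkowski inequality, combined with the uniform size hypotheses $B_r(0)\subset K \subset B_R(0)$. The argument proceeds in three stages, with the key input being that a concave function controlled at two widely spaced reference heights is automatically Lipschitz on any compactly contained subinterval.

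First, fix $\xi \in S^{n-1}$ and set $g(t) := A_{K,\xi}(t)^{1/(n-1)}$. Since $K\supset B_r(0)$, the section at height $t$ contains an $(n{-}1)$-dimensional ball of radius $\sqrt{r^2 - t^2}$ for $|t|\leq r$; in particular $A_{K,\xi}(t) > 0$ on $(-r, r)$, and by the Brunn concavity theorem $g$ is finite and concave on $[-r,r]$. On the other hand, $K \subset B_R(0)$ forces every section to lie inside an $(n{-}1)$-ball of radius $\le R$, giving the uniform upper bound $g(t) \leq \kappa_{n-1}^{1/(n-1)} R$.

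Second, I would use concavity to bound the one-sided derivatives of $g$ on the smaller interval $[-r/2, r/2]$. Since $g'$ is non-increasing on $[-r,r]$, its supremum on $[-r/2,r/2]$ is $g'_-(-r/2)$ and its infimum is $g'_+(r/2)$. Comparing chord slopes on $[-r,-r/2]$ and using $g(-r)\geq 0$ gives
\[
g'_-(-r/2) \leq \frac{g(-r/2) - g(-r)}{r/2} \leq \frac{2\,g(-r/2)}{r} \leq \frac{2\kappa_{n-1}^{1/(n-1)} R}{r},
\]
and a symmetric argument at $r/2$ yields the same bound on $|g'_+(r/2)|$. Hence $g$ is Lipschitz on $[-r/2, r/2]$ with constant $2\kappa_{n-1}^{1/(n-1)} R/r$, uniformly in $\xi$.

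Third, I would convert this to a Lipschitz bound on $A_{K,\xi} = g^{n-1}$ using the factorization $a^{n-1}-b^{n-1} = (a-b) \sum_{k=0}^{n-2} a^k b^{n-2-k}$ and the uniform upper bound $g \leq \kappa_{n-1}^{1/(n-1)} R$, which produces the estimate
\[
|A_{K,\xi}(t) - A_{K,\xi}(s)| \leq (n-1)\bigl(\kappa_{n-1}^{1/(n-1)} R\bigr)^{n-2} \cdot \frac{2\kappa_{n-1}^{1/(n-1)} R}{r}\, |t-s| = \frac{2(n-1)\kappa_{n-1} R^{n-1}}{r}\, |t-s|.
\]
Recalling $\kappa_{n-1} = \pi^{(n-1)/2}/\Gamma((n+1)/2)$, this matches the claimed form $L(n)\,R^{n-1} r^{-1}|t-s|$; the authors' numerical factor of $8$ in place of $2$ presumably absorbs slack somewhere in the above estimates (for instance in replacing $g(\pm r/2)$ with a cruder upper bound valid for the whole interval).

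The only real obstacle is the usual technical one that concave functions need not be differentiable everywhere, but this is entirely bypassed by working with chord slopes rather than pointwise derivatives. Everything else is routine bookkeeping with the size hypotheses on $K$.
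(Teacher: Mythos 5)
Your proof is correct and follows essentially the same route as the paper's: apply Brunn's theorem to get concavity of $A_{K,\xi}^{1/(n-1)}$, bound its chord slopes on $[-r/2,r/2]$ by comparison with chords on the outer annulus $[-r,-r/2]\cup[r/2,r]$ using the size hypotheses, then convert to a Lipschitz bound on $A_{K,\xi}$ itself via the mean value theorem. The only difference is bookkeeping: the paper anchors at $\pm 3r/4$ and crudely estimates $|f(-3r/4)-f(-r)|\le 2\max f$, producing the factor $8$, whereas you use non-negativity to write $g(-r/2)-g(-r)\le g(-r/2)$ and obtain the sharper constant $2(n-1)\kappa_{n-1}$, which of course still implies the lemma as stated.
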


\begin{proof}
For $\xi\in S^{n-1}$, Brunn's Theorem implies $f := A_{K,\xi}^\frac{1}{n-1}$ is concave on its support, which includes the interval $[-r,r]$. Let
\begin{align*}
L_0 = \max \left\lbrace \left| \frac{ f\left(\frac{-3r}{4}\right) - f(-r) }{ \frac{-3r}{4} - (-r) } \right| ,
	\left| \frac{ f(r) - f\left(\frac{3r}{4}\right) }{ r - \frac{3r}{4} } \right| \right\rbrace ,
\end{align*}
and suppose $s,t\in [-r/2, r/2]$ are such that $s<t$. If
\begin{align*}
\frac{f(t)-f(s)}{t-s} > 0 ,
\end{align*}
then
\begin{align*}
\frac{ f\left(\frac{-3r}{4}\right) - f(-r) }{ \frac{-3r}{4} - (-r) }
	\geq \frac{ f(s) - f\left( \frac{-3r}{4} \right) }{s- \left( \frac{-3r}{4} \right) }
	\geq \frac{f(t)-f(s)}{t-s} > 0 \, ;
\end{align*}
otherwise, we will obtain a contradiction of the concavity of $f$. Similarly, if
\begin{align*}
\frac{f(t)-f(s)}{t-s} < 0 ,
\end{align*}
then
\begin{align*}
\frac{ f(r) - f\left(\frac{3r}{4}\right) }{ r - \frac{3r}{4} }
	\leq \frac{ f\left( \frac{3r}{4} \right) - f(t) }{ \frac{3r}{4} - t }
	\leq \frac{f(t)-f(s)}{t-s} < 0 .
\end{align*}
Therefore,
\begin{align*}
\left| A_{K,\xi}^\frac{1}{n-1}(t) - A_{K,\xi}^\frac{1}{n-1}(s) \right| \leq L_0 \, |t-s|
\end{align*}
for all $s,t\in [-r/2,r/2]$. Now, we have
\begin{align*}
\left| A_{K,\xi}(t) - A_{K,\xi}(s) \right| &\leq (n-1) \left( \max_{t_0\in\mathbb{R}} A_{K,\xi}(t_0) \right)^\frac{n-2}{n-1}
	\left| A_{K,\xi}^\frac{1}{n-1}(t) - A_{K,\xi}^\frac{1}{n-1}(s) \right|
\end{align*}
by the Mean Value Theorem, and
\begin{align*}
L_0 &\leq \frac{4}{r} \cdot 2 \left( \max_{t_0\in\mathbb{R}} A_{K,\xi}(t_0) \right)^\frac{1}{n-1}
	= \frac{8}{r} A_{K,\xi}^\frac{1}{n-1}\big( t_K(\xi) \big) .
\end{align*}
Finally, since $K$ is contained in a ball of radius $R$, we have
\begin{align*}
A_{K,\xi}\big( t_K(\xi) \big) \leq \frac{\pi^\frac{n-1}{2}}{\Gamma\left( \frac{n+1}{2} \right) } R^{n-1} .
\end{align*}
Combining these inequalities gives
\begin{align*}
\left| A_{K,\xi}(t) - A_{K,\xi}(s) \right| \leq L(n) \, R^{n-1} \, r^{-1} \, |t-s|
\end{align*}
for all $s,t\in [-r/2,r/2]$ and $\xi\in S^{n-1}$.
\end{proof}

%{\color{red} An explicit calculation shows $L(n)$ from the previous lemma achieves its maximum when $n=8$, with $L(8)=128\pi^3/15$. }

We now prove two lemmas that will be the core of the proof of Theorem~\ref{MainResult_1}.

\begin{lemma}\label{integral of parallel}
Let $K$ be a convex body in $\mathbb{R}^n$ contained in a ball of radius $R$, and containing a ball of radius $r$, where both balls are centred at the origin. Let $\lbrace K_\delta \rbrace_{0<\delta<1}$ be as in Lemma \ref{approx}. If there exists $0<\varepsilon < \frac{r^2}{16}$ so that
\begin{align*}
\rho(CK, IK) \leq \varepsilon,
\end{align*}
then, for $\delta>0$ small enough,
\begin{align*}
&\int_{S^1} \big| A_{K_\delta,\xi}'(0)\big|\, d\xi \leq \left( 6\pi + \frac{32\, \pi}{ \sqrt{3} \, r } \right) \sqrt{\varepsilon}
	\ &\mbox{ when } n=2, \\
&\int_{S^{n-1}} \big| A_{K_\delta,\xi}'(0)\big|^2\, d\xi
	\leq C(n) \left( \sqrt{\varepsilon} + \frac{R^{2n-4}}{ r } + \frac{R^{3n-3}}{r^{n+2}} \right) \sqrt{\varepsilon}
	\ &\mbox{ when } n\geq 3.
\end{align*}
Here, $C(n)>0$ are constants depending only on the dimension.
\end{lemma}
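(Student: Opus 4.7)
The plan is to combine the integral formula (\ref{BK formula}) with the hypothesis and the elementary identity $\max(a,b) = \tfrac{a+b}{2} + \tfrac{|a-b|}{2}$. A polar-coordinates change of variables rewrites (\ref{BK formula}) as
\begin{equation*}
\int_{S^{n-1}} A_{K,\xi}(t)\, d\xi = \omega_n f_K(t) = \kappa_{n-1} \int_{S^{n-1}} (\rho_K(\eta)^2 - t^2)_+^{(n-1)/2}\, d\eta,
\end{equation*}
showing in particular that $f_K$ is even. The hypothesis yields $\int m_K\, d\xi \leq \omega_n f_K(0) + \omega_n\varepsilon$, while $m_K(\xi) \geq \max(A_{K,\xi}(t), A_{K,\xi}(-t))$, combined with $\int A_{K,\xi}(\pm t)\, d\xi = \omega_n f_K(t)$ (via $\xi \mapsto -\xi$), gives
\begin{equation*}
\int_{S^{n-1}} m_K\, d\xi \geq \omega_n f_K(t) + \tfrac{1}{2} \int_{S^{n-1}} |A_{K,\xi}(t) - A_{K,\xi}(-t)|\, d\xi.
\end{equation*}
Combining the two inequalities:
\begin{equation*}
\int_{S^{n-1}} |A_{K,\xi}(t) - A_{K,\xi}(-t)|\, d\xi \leq 2\omega_n (f_K(0) - f_K(t)) + 2\omega_n\varepsilon.
\end{equation*}

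The quadratic bound $f_K(0) - f_K(t) \leq C_1(n,r,R)\, t^2$ for $|t|\leq r/2$ follows by applying Bernoulli's inequality $(1-y)^{(n-1)/2} \geq 1 - \tfrac{n-1}{2} y$ (valid for $n\geq 3$, $y\in[0,1]$) to $\rho_K^{n-1} - (\rho_K^2 - t^2)^{(n-1)/2}$, using $\rho_K\geq r$; for $n=2$ one uses instead $\rho - \sqrt{\rho^2-t^2} = t^2/(\rho + \sqrt{\rho^2-t^2})$. By Lemma~\ref{approx} I then pass to the smooth approximation $K_\delta$, for which $A_{K_\delta,\xi}$ is smooth in $t$ and the inequality above transfers with a vanishing error as $\delta\to 0$. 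A Taylor expansion gives $A_{K_\delta,\xi}(t) - A_{K_\delta,\xi}(-t) = 2t\, A'_{K_\delta,\xi}(0) + O(t^3)$, so that
\begin{equation*}
\int_{S^{n-1}} |A'_{K_\delta,\xi}(0)|\, d\xi \leq \omega_n\Bigl(C_1 t + \tfrac{\varepsilon}{t}\Bigr) + \eta_\delta(t),
\end{equation*}
where $\eta_\delta(t) = O(t^2)$ with a constant depending on $\delta$. Choosing $t\sim\sqrt{\varepsilon}$ (admissible since $\varepsilon < r^2/16$) and then $\delta$ sufficiently small, the remainder is absorbed and one obtains $\int |A'_{K_\delta,\xi}(0)|\, d\xi \leq C(n,r,R)\sqrt{\varepsilon}$. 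For $n=2$ this is the claim; for $n\geq 3$, the pointwise bound $|A'_{K_\delta,\xi}(0)| \leq L(n) R^{n-1}/r$ from Lemma~\ref{Lipschitz} promotes the $L^1$ estimate to the $L^2$ estimate via $\int |A'_{K_\delta,\xi}(0)|^2 d\xi \leq (L(n)R^{n-1}/r) \int |A'_{K_\delta,\xi}(0)| d\xi$.

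The main obstacle is uniform-in-$\delta$ control of the Taylor remainder $\eta_\delta(t)$, which a priori depends on $\sup_\xi |A''_{K_\delta,\xi}|$ and may blow up as $\delta\to 0$. The quantifier ``for $\delta>0$ small enough'' in the statement is crucial: one fixes $t\sim\sqrt{\varepsilon}$ first and then chooses $\delta$ small enough that $\eta_\delta(t)$ is negligible relative to the main term $2\omega_n\sqrt{C_1\varepsilon}$.
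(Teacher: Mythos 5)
Your symmetrization idea is elegant and genuinely different from the paper's: you use $\max(a,b)=\tfrac{a+b}{2}+\tfrac{|a-b|}{2}$ together with the identity $\omega_n f_K(t)=\kappa_{n-1}\int_{S^{n-1}}(\rho_K^2-t^2)_+^{(n-1)/2}d\xi$ to obtain $\int_{S^{n-1}}|A_{K,\xi}(t)-A_{K,\xi}(-t)|\,d\xi \leq 2\omega_n\big(C_1 t^2+\varepsilon\big)$ directly for $K$ itself, which is clean. However, the final step does not close. The Taylor remainder in $A_{K_\delta,\xi}(t)-A_{K_\delta,\xi}(-t)=2tA'_{K_\delta,\xi}(0)+R_\delta(\xi,t)$ satisfies only $|R_\delta(\xi,t)|\le t\int_{-t}^{t}|A''_{K_\delta,\xi}(s)|\,ds$, and $\sup_{\xi,s}|A''_{K_\delta,\xi}(s)|$ is \emph{not} bounded uniformly in $\delta$: a general convex body $K$ is merely $C^0$, and the mollified $K_\delta$ will have $A''_{K_\delta,\xi}$ blowing up as $\delta\to 0$. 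Fixing $t\sim\sqrt{\varepsilon}$ and ``then taking $\delta$ small'' makes this term \emph{worse}, not better, so the quantifier in the statement cannot rescue the argument as you describe. You notice this obstacle in your final paragraph but dismiss it without a mechanism.

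Controlling $\int_{S^{n-1}}\int_{-\sqrt{\varepsilon}}^{\sqrt{\varepsilon}}|A''_{K_\delta,\xi}(s)|\,ds\,d\xi$ uniformly in $\delta$ is precisely the technical heart of the paper's proof, and the idea you are missing is the use of Brunn's theorem to kill the bad sign. Writing $g_\delta:=A_{K_\delta,\xi}^{1/(n-1)}$, concavity gives $g_\delta''\le 0$, and the chain rule yields
\begin{align*}
A_{K_\delta,\xi}'' \;=\; (n-1)A_{K_\delta,\xi}^{\frac{n-2}{n-1}}\,g_\delta'' \;+\; \frac{n-2}{n-1}\frac{(A_{K_\delta,\xi}')^2}{A_{K_\delta,\xi}},
\end{align*}
so that $|A''_{K_\delta,\xi}|\le -A''_{K_\delta,\xi}+2\frac{n-2}{n-1}\frac{(A'_{K_\delta,\xi})^2}{A_{K_\delta,\xi}}$. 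The second term is bounded uniformly in $\delta$ by Lemma~\ref{Lipschitz} and the inner-ball condition, while the first contributes $\int_{S^{n-1}}\big(A'_{K_\delta,\xi}(-\sqrt{\varepsilon})-A'_{K_\delta,\xi}(\sqrt{\varepsilon})\big)d\xi = \omega_n\big(f'_{K_\delta}(-\sqrt{\varepsilon})-f'_{K_\delta}(\sqrt{\varepsilon})\big)$, which the differentiated Brehm--Voigt formula (\ref{BK formula}) shows is $O(\sqrt{\varepsilon})$ uniformly in $\delta$. With this ingredient your symmetrization route would work and arguably give a cleaner presentation than the paper's mean-value-theorem construction of the points $c_\delta(\xi)$; without it, the proof is incomplete.
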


\begin{proof}
By Lemma \ref{approx}, we may choose $0< \alpha <1/2$ small enough so that for every $0<\delta<\alpha$,
\begin{align*}
\sup_{\xi\in S^{n-1}} \sup_{|t|\leq r/4} \big| A_{K,\xi}(t) - A_{K_\delta,\xi}(t) \big| \leq \varepsilon .
\end{align*}
We first show that for each $0<\delta<\alpha$ and $\xi\in S^{n-1}$, there exists a number $c_\delta(\xi)$ with $| c_\delta(\xi)| \leq \sqrt{\varepsilon}$ for which
\begin{align*}
\big| A_{K_\delta,\xi}'\big( c_\delta(\xi) \big) \big| \leq 3 \sqrt{\varepsilon} .
\end{align*}
Indeed, if $\xi\in S^{n-1}$ is such that $|t_{K_\delta}(\xi)| \leq \sqrt{\varepsilon}$, then
\begin{align*}
A_{K_\delta,\xi}'\big( t_{K_\delta}(\xi) \big) = 0 ,
\end{align*}
and we may take $c_\delta(\xi) = t_{K_\delta}(\xi)$.

Assume $\xi\in S^{n-1}$ is such that $|t_{K_\delta}(\xi)|>\sqrt{\varepsilon}$. Letting $s$ denote the sign of $t_{K_\delta}(\xi)$, we have
\begin{align*}
&\big| A_{K_\delta,\xi}(s\sqrt{\varepsilon}) - A_{K_\delta,\xi}(0) \big|
	=  A_{K_\delta,\xi}(s\sqrt{\varepsilon}) - A_{K_\delta,\xi}(0) \\
&= \Big( A_{K,\xi}(s\sqrt{\varepsilon}) - A_{K,\xi}(0) \Big)
	+ \Big( A_{K_\delta,\xi}(s\sqrt{\varepsilon}) - A_{K,\xi}(s\sqrt{\varepsilon}) \Big) \\
&\hspace{4.5cm} + \Big( A_{K,\xi}(0) - A_{K_\delta,\xi}(0) \Big) \\
&\leq \sup_{\xi\in S^{n-1}} \left| \max_{t\in\mathbb{R}} A_{K,\xi}(t) - A_{K,\xi}(0) \right|
	+ 2 \sup_{\xi\in S^{n-1}} \sup_{|t|\leq r/4} \big| A_{K,\xi}(t) - A_{K_\delta,\xi}(t) \big| \\
&\leq 3 \varepsilon .
\end{align*}
It then follows from the Mean Value Theorem that there is a number $c_\delta(\xi)$ with  $|c_\delta(\xi)|\leq \sqrt{\varepsilon}$ for  which
\begin{align*}
\big| A_{K_\delta,\xi}'\big( c_\delta(\xi)\big) \big|
	= \left| \frac{ A_{K_\delta,\xi}(s\sqrt{\varepsilon}) - A_{K_\delta,\xi}(0) }{ \sqrt{\varepsilon} - 0 } \right|
	\leq 3 \sqrt{\varepsilon} .
\end{align*}

With the numbers $c_\delta(\xi)$ as above, for the case $n=2$ we have
\begin{align}\label{dim2}
&\int_{S^1} \big| A_{K_\delta,\xi}'(0)\big| \, d\xi \nonumber \\
&\leq \int_{S^1} \left( \big| A_{K_\delta,\xi}'\big(c_\delta(\xi)\big) \big|
	+ \left| \int_{ c_\delta(\xi) }^0 A_{K_\delta,\xi}''(t) \, dt \right| \right) \, d\xi \nonumber \\
&\leq 6\pi \sqrt{\varepsilon}
	+ \int_{S^1} \int_{-\sqrt{\varepsilon}}^{\sqrt{\varepsilon}} \big| A_{K_\delta,\xi}''(t) \big| \, dt \, d\xi.
\end{align}

When $0<\delta<1/2$, $K_\delta$ is contained in a ball of radius $2R$, and contains a ball of radius $r/2$. Lemma \ref{Lipschitz} then implies
\begin{align*}
\sup_{\xi\in S^{n-1}} \sup_{t\in (-\sqrt{\varepsilon}, \sqrt{\varepsilon})}
	\big| A_{K_\delta,\xi}'(t) \big| \leq \frac{ 2 L(n) \, (2R)^{n-1}}{ r} .
\end{align*}
So, when $n\geq 3$,
\begin{align}\label{dim>2}
&\int_{S^{n-1}} \big| A_{K_\delta,\xi}'(0)\big|^2 \, d\xi \nonumber \\
&\leq \int_{S^{n-1}} \left( \big| A_{K_\delta,\xi}'\big(c_\delta(\xi)\big) \big|^2
	+ \left| \int_{ c_\delta(\xi) }^0 2 A_{K_\delta,\xi}''(t) A_{K_\delta,\xi}'(t) \, dt \right| \right) \, d\xi \nonumber \\
&\leq 9\, \omega_n \, \varepsilon 	+ \frac{ 4 L(n) \, (2R)^{n-1}}{ r} \int_{S^{n-1}}
	\int_{-\sqrt{\varepsilon}}^{\sqrt{\varepsilon}} \big| A_{K_\delta,\xi}''(t) \big| \, dt \, d\xi
\end{align}

Considering inequalities (\ref{dim2}) and (\ref{dim>2}), we still need to bound
\begin{align*}
\int_{S^{n-1}} \int_{-\sqrt{\varepsilon}}^{\sqrt{\varepsilon}} \big| A_{K_\delta,\xi}''(t) \big| \, dt \, d\xi
\end{align*}
for arbitrary $n$. Rearranging the equation
\begin{align*}
\frac{d^2}{dt^2} A_{K_\delta,\xi}^\frac{1}{n-1} (t)
	&= \frac{d}{dt}\left( \frac{1}{n-1} A_{K_\delta,\xi}^\frac{2-n}{n-1}(t) \, A_{K_\delta,\xi}'(t) \right) \\
&= \frac{2-n}{(n-1)^2} A_{K_\delta,\xi}^\frac{3-2n}{n-1}(t)\, \left( A_{K_\delta,\xi}'(t) \right)^2
	+ \frac{1}{n-1} A_{K_\delta,\xi}^\frac{2-n}{n-1}(t) \, A_{K_\delta,\xi}''(t)
\end{align*}
gives
\begin{align*}
A_{K_\delta,\xi}''(t)
	= (n-1) A_{K_\delta,\xi}^\frac{n-2}{n-1}(t) \frac{d^2}{dt^2} A_{K_\delta,\xi}^\frac{1}{n-1} (t)
	+ \frac{n-2}{n-1} \frac{ \big( A_{K_\delta,\xi}'(t)\big)^2}{ A_{K_\delta,\xi}(t)} .
\end{align*}
Brunn's Theorem implies that the second derivative of $A_{K_\delta,\xi}^\frac{1}{n-1}$ is non-positive for $|t| < r$, so
\begin{align*}
\big| A_{K_\delta,\xi}''(t) \big|
	&\leq (1-n) A_{K_\delta,\xi}^\frac{n-2}{n-1}(t) \frac{d^2}{dt^2} A_{K_\delta,\xi}^\frac{1}{n-1} (t)
	+ \frac{n-2}{n-1} \frac{ \big( A_{K_\delta,\xi}'(t)\big)^2}{ A_{K_\delta,\xi}(t)} \\
&= - A_{K_\delta,\xi}''(t) + 2 \left( \frac{n-2}{n-1} \right) \frac{ \big( A_{K_\delta,\xi}'(t)\big)^2}{ A_{K_\delta,\xi}(t)} .
\end{align*}
Because $K_\delta$ contains a ball of radius $r/2$ centred at the origin, we have
\begin{align*}
A_{K_\delta,\xi}(t) \geq \frac{1}{\Gamma\left(\frac{n+1}{2}\right)} \left(\frac{3\pi r^2}{16}\right)^\frac{n-1}{2}
\end{align*}
for $|t| \leq r/4$, and so
\begin{align*}
\frac{n-2}{n-1} \frac{ \big( A_{K_\delta,\xi}'(t)\big)^2}{ A_{K_\delta,\xi}(t)} &\leq \frac{n-2}{n-1} \, \Gamma\left( \frac{n+1}{2}\right) \,
	\left( \frac{ 2 L(n) \, (2R)^{n-1}}{ r} \right)^2 \, \left( \frac{16}{3\pi r^2}\right)^\frac{n-1}{2} \\
&= \frac{ \widetilde{L}(n) \, R^{2n-2} }{ r^{n+1} }
\end{align*}
for all $|t| \leq \sqrt{\varepsilon}$, where $\widetilde{L}(n)$ is a constant depending only on $n$. Therefore,
\begin{align}\label{dim 2 & dim>2}
&\int_{S^{n-1}} \int_{-\sqrt{\varepsilon}}^{\sqrt{\varepsilon}} \Big| A_{K_\delta,\xi}''(t)\Big| \, dt\, d\xi \nonumber \\
&\leq  \int_{S^{n-1}} \int_{-\sqrt{\varepsilon}}^{\sqrt{\varepsilon}} \Big(- A_{K_\delta,\xi}''(t)\Big) \, dt \, d\xi
	+ \frac{ 4 \, \omega_n \, \widetilde{L}(n) \, R^{2n-2} }{ r^{n+1} } \sqrt{\varepsilon} .
\end{align}

We will bound the first term on the final line above using formula (\ref{BK formula}). Letting
\begin{align*}
\widetilde{C}(n) = \frac{\Gamma\left(\frac{n}{2}\right)}{\sqrt{\pi} \, \Gamma\left(\frac{n-1}{2}\right)} ,
\end{align*}
formula (\ref{BK formula}) becomes
\begin{align*}
f_{K_\delta}(t) &= \widetilde{C}(n) \int_{S^{n-1}}
	\int_{|t|}^{\rho_{K_\delta}(\xi)} \frac{1}{r} \left( 1 - \frac{t^2}{r^2} \right)^\frac{n-3}{2} r^{n-1}\, dr\,d\xi \\
&= \widetilde{C}(n) \int_{S^{n-1}} \int_{|t|}^{\rho_{K_\delta}(\xi)} r \left( r^2 - t^2 \right)^\frac{n-3}{2} \, dr \, d\xi \\
&= \frac{\widetilde{C}(n)}{(n-1)} \int_{S^{n-1}} \left( \rho_{K_\delta}^2(\xi) - t^2 \right)^\frac{n-1}{2} \, d\xi .
\end{align*}
The derivatives of $A_{K_\delta,\xi}$ and $\left( \rho_{K_\delta}^2(\xi) - t^2 \right)^\frac{n-1}{2}$ are bounded on $(-\sqrt{\varepsilon},\sqrt{\varepsilon})$ uniformly with respect to $\xi\in S^{n-1}$, so
\begin{align*}
f_{K_\delta}'(t) = \frac{1}{\omega_n} \int_{S^{n-1}} A_{K_\delta,\xi}'(t) \, d\xi
	= -\widetilde{C}(n) \, t \int_{S^{n-1}} \left( \rho_{K_\delta}^2(\xi) - t^2 \right)^\frac{n-3}{2}\, d\xi .
\end{align*}
Observing $\widetilde{C}(2) = \pi^{-1}$, and using that $0<\varepsilon <r^2/16$ and $r/2\leq \rho_{K_\delta}\leq 2R$ for $\delta<1/2$, we have
\begin{align*}
&\left| \int_{S^{n-1}} A_{K_\delta,\xi}'(\pm \sqrt{\varepsilon}) \, d\xi \right| \\
&= \omega_n \left| f_{K_\delta}'(\pm \sqrt{\varepsilon}) \right|
	= \widetilde{C}(n) \, \omega_n \sqrt{\varepsilon}
	\int_{S^{n-1}} \left( \rho_{K_\delta}^2(\xi) - \varepsilon \right)^\frac{n-3}{2}\, d\xi \\
&\leq
	\begin{cases}
		16 \, \pi \left(\sqrt{3} \, r\right)^{-1} \sqrt{\varepsilon} \ &\mbox{ if } n=2, \\
		\widetilde{C}(n) \, \omega_n^2 \, (2R)^{n-3} \sqrt{\varepsilon} \ &\mbox{ if } n\geq 3 .
	\end{cases}
\end{align*}
This implies
\begin{align}\label{dim combined}
\left| \int_{S^{n-1}} \int_{-\sqrt{\varepsilon}}^{\sqrt{\varepsilon}} - A_{K_\delta,\xi}''(t) \, dt \, d\xi \right|
	&= \left| \int_{S^{n-1}} \big( A_{K_\delta,\xi}'(-\sqrt{\varepsilon})
	- A_{K_\delta,\xi}'(\sqrt{\varepsilon}) \big) \, d\xi \right| \nonumber \\
&\leq
	\begin{cases}
		32 \, \pi \left(\sqrt{3} \, r\right)^{-1} \sqrt{\varepsilon} \ &\mbox{ if } n=2, \\
		2 \, \widetilde{C}(n) \, \omega_n^2 \, (2R)^{n-3} \sqrt{\varepsilon} \ &\mbox{ if } n\geq 3 .
	\end{cases}
\end{align}

Noting that $\widetilde{L}(2) = 0$, inequalities (\ref{dim2}), (\ref{dim 2 & dim>2}), and (\ref{dim combined}) give
\begin{align*}
\int_{S^1} \big| A_{K_\delta,\xi}'(0)\big| \, d\xi \leq \left( 6\pi + \frac{32\pi}{ \sqrt{3} \, r } \right) \sqrt{\varepsilon}
\end{align*}
when $n=2$. For $n\geq 3$, inequalities (\ref{dim>2}), (\ref{dim 2 & dim>2}), and (\ref{dim combined}) give
\begin{align*}
&\int_{S^{n-1}} \big| A_{K_\delta,\xi}'(0)\big|^2\, d\xi \leq C(n) \left( \sqrt{\varepsilon}
	+ \frac{R^{2n-4}}{ r } + \frac{R^{3n-3}}{r^{n+2}} \right) \sqrt{\varepsilon} ,
\end{align*}
where $C(n)$ is a constant depending on $n$.
\end{proof}

%%%%%%%%%%%%%%%%%%%%%%%%%%%%%%%%%%%%%%%%%%%%%%%%%%%%%%%%%%%%%%%%%%%%%%%%%%%%%%%%%%%%%%%%%%%%%%%%%%%%%%%%%%%%%%%

\begin{lemma}\label{keylemma}
Let $K$ and $L$ be infinitely smooth convex bodies in $\mathbb{R}^n$ which are contained in a ball of radius $R$, and contain a ball of radius $r$, where both balls are centred at the origin. Let $p\in (0,n)$. If $\varepsilon>0$ is such that
\begin{align*}
\left\Vert I_p\left( \Vert\xi\Vert_K^{-n+p} - \Vert\xi\Vert_L^{-n+p} \right) \right\Vert_2 \leq \varepsilon ,
\end{align*}
then when $n\leq 2p$,
\begin{align*}
\rho(K,L) \leq C(n,p) \, R^2 r^\frac{-3n-1+2p}{n+1} \varepsilon^\frac{2}{n+1},
\end{align*}
and when $n>2p$,
\begin{align*}
\rho(K,L) \leq C(n,p) \, R^2 r^\frac{-3n-1+2p}{n+1}
	\left( \varepsilon^2 + \frac{R^{2(n+1-p)}}{ r^2 } \right)^\frac{n-2p}{(n+2-2p)(n+1)} \varepsilon^\frac{4}{(n+2-2p)(n+1)} .
\end{align*}
Here, $\|\cdot\|_2$ denotes the norm on $L^2(S^{n-1})$, and $C(n,p)>0$ are constants depending on the dimension and $p$.
\end{lemma}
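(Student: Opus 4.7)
The plan is to set $f(\xi) := \|\xi\|_K^{-n+p} - \|\xi\|_L^{-n+p}$, which is $C^\infty$ on $S^{n-1}$ since $K,L$ are smooth, use the eigenstructure of $I_p$ from Lemma~\ref{eigen} to convert the hypothesis into an $L^2$ bound on $f$, and then pass to $\|\rho_K - \rho_L\|_\infty$ via a Lipschitz-to-sup-norm interpolation. First I would expand $f = \sum_{m\geq 0} Q_m$ in spherical harmonics; Lemma~\ref{eigen} then gives $I_p f = \sum_m \lambda_m(n,p)\, Q_m$, so Parseval and the hypothesis yield
\[
\sum_{m\geq 0} |\lambda_m(n,p)|^2 \, \|Q_m\|_2^2 \; \leq \; \varepsilon^2.
\]
Stirling's formula applied to the expressions in Lemma~\ref{eigen} gives the asymptotics $|\lambda_m(n,p)| \asymp (1+m)^{p-n/2}$ with constants depending only on $n,p$.

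Next I would set $M := R^{n+1-p}/r$. A direct differentiation, using that the Minkowski functional is $1/r$-Lipschitz on $S^{n-1}$ and that $\|\xi\|_K \geq 1/R$, shows $|\nabla_o f| \leq C(n,p)\,M$ pointwise, hence $\|\nabla_o f\|_2 \leq C(n,p)\,M$. When $n \leq 2p$, the eigenvalues $|\lambda_m|$ are bounded below by a positive constant $c(n,p)$ uniformly in $m$, so $\|f\|_2 \leq C(n,p)\,\varepsilon$ immediately. When $n > 2p$, I would split $f = f_{\leq N} + f_{>N}$ for an integer $N \geq 1$: using $|\lambda_m|^{-2} \leq C(n,p)(1+m)^{n-2p}$, the low-frequency part obeys $\|f_{\leq N}\|_2^2 \leq C(n,p)\, N^{n-2p}\,\varepsilon^2$, while a standard Poincar\'e-type inequality on the sphere gives $\|f_{>N}\|_2^2 \leq N^{-2}\|\nabla_o f\|_2^2 \leq C(n,p)\,M^2 N^{-2}$. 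Optimizing at $N \sim (M/\varepsilon)^{2/(n-2p+2)}$ and then using $M^{2(n-2p)/(n-2p+2)} \leq (\varepsilon^2 + M^2)^{(n-2p)/(n-2p+2)}$ to handle the regime $\varepsilon > M$ in one formula, I would obtain
\[
\|f\|_2^2 \; \leq \; C(n,p)\,\bigl(\varepsilon^2 + R^{2(n+1-p)}/r^2\bigr)^{(n-2p)/(n-2p+2)}\, \varepsilon^{4/(n-2p+2)}.
\]

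To finish, I would combine this with two further ingredients. First, the Mean Value Theorem applied to $u \mapsto u^{1/(n-p)}$ on $[r^{n-p}, R^{n-p}]$ gives the pointwise bound $|\rho_K - \rho_L|(\xi) \leq C(n,p,r,R)\,|f(\xi)|$, transferring the $L^2$ bound from $f$ to $\rho_K - \rho_L$. Second, $\rho_K - \rho_L$ is Lipschitz on $S^{n-1}$ with constant at most $2R^2/r$, since $\nabla_o \rho_K = -\|\xi\|_K^{-2}\nabla_o\|\xi\|_K$ and $\|\xi\|_K^{-1} \leq R$, $|\nabla_o\|\xi\|_K| \leq 1/r$. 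Applying the classical interpolation inequality
\[
\|g\|_\infty \; \leq \; C(n)\, \mathrm{Lip}(g)^{(n-1)/(n+1)}\, \|g\|_2^{2/(n+1)},
\]
whose short proof observes that $|g| \geq \tfrac12\|g\|_\infty$ on a spherical cap of radius $\|g\|_\infty/(2\mathrm{Lip}(g))$ around a maximizer, to $g = \rho_K - \rho_L$ and substituting the above $L^2$ estimate produces both claimed cases, up to harmless bounds like $R^{2(n-1)/(n+1)} \leq R^2$ and an analogous loose bound on the $r$-exponent needed to match the clean form stated in the lemma.

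The hard part will be the high-frequency optimization in the case $n > 2p$: the estimate on $\|\nabla_o f\|_2$ must genuinely produce the factor $R^{n+1-p}/r$, the cutoff $N$ must be balanced to reproduce the exact exponent $(n-2p)/((n-2p+2)(n+1))$ attached to $(\varepsilon^2 + R^{2(n+1-p)}/r^2)$, and the $r,R$-powers must be tracked cleanly through the three successive transformations $I_p f \mapsto f \mapsto \rho_K - \rho_L \mapsto \|\rho_K - \rho_L\|_\infty$. I expect this bookkeeping, rather than any deep new idea, to be the principal difficulty.
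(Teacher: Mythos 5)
Your proposal is correct, and it reaches the same estimates as the paper by a genuinely different route at two of the three stages. First, for the $L^2$ bound on $f = \|\xi\|_K^{-n+p} - \|\xi\|_L^{-n+p}$ in the case $n > 2p$, the paper applies H\"older's inequality with weights $|\lambda_m|^{\pm 4/(n+2-2p)}$ to the harmonic expansion and then bounds $\sum |\lambda_m|^{-4/(n-2p)} \|Q_m\|_2^2 \lesssim \|Q_0\|_2^2 + \|\nabla_o f\|_2^2$ via the identity $\|\nabla_o f\|_2^2 = \sum m(m+n-2)\|Q_m\|_2^2$; you instead truncate at a frequency $N$, use the eigenvalue asymptotics for the low-frequency piece and a Poincar\'e-type bound for the tail, and optimize $N$. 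These are the two standard realizations of the same interpolation and they give the same exponent $4/((n-2p+2)(n+1))$, and the same $(\varepsilon^2 + R^{2(n+1-p)}/r^2)$ factor; your observation that the trivial bound handles the regime $\varepsilon \gtrsim R^{n+1-p}/r$ in one formula is the same device the paper uses implicitly when absorbing $\|Q_0\|_2^2 \leq \varepsilon^2/|\lambda_0|^2$. Second, for the passage from $\|f\|_2$ to $\rho(K,L)$, the paper goes via $\|\xi\|_K - \|\xi\|_L = h_{K^\circ} - h_{L^\circ}$ and Vitale's theorem (Theorem~\ref{Vitale}) applied to the polar bodies, together with the Mean Value Theorem for $t \mapsto t^{-n+p}$; you instead compare $\rho_K - \rho_L$ to $f$ directly via the Mean Value Theorem for $u \mapsto u^{1/(n-p)}$ and then apply the elementary Lipschitz-to-$L^2$ cap estimate to $\rho_K - \rho_L$. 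This is more self-contained (Vitale's inequality is essentially the same kind of estimate specialized to support functions) and, because your pointwise bound $|\rho_K - \rho_L| \lesssim r^{1-n+p}|f|$ is tighter by a factor $R^2/r^2$ than the paper's two-step bound, your final exponents of $R$ and $r$ come out slightly sharper before being relaxed to match the statement. One small slip in the bookkeeping: you wrote that $R^{2(n-1)/(n+1)} \leq R^2$ is ``harmless,'' but that requires $R \geq 1$; the correct observation is that your bound differs from the paper's by the single factor $(r/R)^{4/(n+1)} \leq 1$, which is harmless with no assumption on the size of $R$.
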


\begin{proof}
Define the function
\begin{align*}
f(\xi) := \Vert\xi\Vert_K^{-n+p} - \Vert\xi\Vert_L^{-n+p}
\end{align*}
on $S^{n-1}$. Towards bounding the radial distance between $K$ and $L$ by $\Vert f\Vert_2$, the $L^2(S^{n-1})$ norm of $f$, note that the identity
\begin{align*}
\rho_K(\xi) - \rho_L(\xi) = \rho_K(\xi) \rho_L(\xi)\big( \Vert\xi\Vert_L - \Vert\xi\Vert_K\big)
\end{align*}
implies
\begin{align*}
\big| \rho_K(\xi) - \rho_L(\xi)\big| \leq R^2 \big| \Vert\xi\Vert_K - \Vert\xi\Vert_L\big| .
\end{align*}
By Theorem \ref{Vitale}, we have
\begin{align*}
\delta_\infty (K^\circ,L^\circ) \leq C(n) D^\frac{n-1}{n+1} \big( \delta_2(K^\circ,L^\circ)\big)^\frac{2}{n+1},
\end{align*}
where $C(n)>0$ is a constant depending on $n$, and $D$ is the diameter of $K^\circ\cup L^\circ$. Both $K^\circ$ and $L^\circ$ are contained in a ball of radius $r^{-1}$ centred at the origin. We then have $D\leq 2r^{-1}$, and
\begin{align*}
\sup_{\xi\in S^{n-1}} \big| \Vert\xi\Vert_K-\Vert\xi\Vert_L\big| \leq C(n) r^\frac{1-n}{n+1}
	\left( \int_{S^{n-1}} \big( \Vert\xi\Vert_K - \Vert\xi\Vert_L\big)^2\, d\xi \right)^\frac{1}{n+1}
\end{align*}
for some new constant $C(n)$. There exists a function $g:S^{n-1}\rightarrow\mathbb{R}$ such that
\begin{align*}
\big( \Vert\xi\Vert_K - \Vert\xi\Vert_L\big) g(\xi) = \Vert\xi\Vert_K^{-n+p} - \Vert\xi\Vert_L^{-n+p} .
\end{align*}
If $\xi\in S^{n-1}$ is such that $\Vert\xi\Vert_K\neq\Vert\xi\Vert_L$, then an application of the Mean Value Theorem to the function $t^{-n+p}$ on the interval bounded by $\Vert\xi\Vert_K$ and $\Vert\xi\Vert_L$ gives
\begin{align*}
| g(\xi) | \geq (n-p) \left( \max\big\lbrace \Vert\xi\Vert_K,\Vert\xi\Vert_L \big\rbrace \right)^{-n-1+p}
	\geq (n-p) r^{n+1-p} .
\end{align*}
Therefore,
\begin{align*}
\big| \Vert\xi\Vert_K - \Vert\xi\Vert_L\big| \leq (n-p)^{-1} r^{-n-1+p} |f(\xi)| .
\end{align*}
Combining the above inequalities, we get
\begin{align}\label{radial to L2}
\sup_{\xi\in S^{n-1}} \big|\rho_K(\xi)-\rho_L(\xi)\big| \leq C(n,p) R^2 r^\frac{-3n-1+2p}{n+1} \Vert f\Vert_2^\frac{2}{n+1} ,
\end{align}
for some constant $C(n,p)$.

We now compare the $L^2$ norm of $f$ to that of $I_p(f)$ by considering two separate cases based on the dimension $n$, as in the proof of Theorem 3.6 in \cite{Good&Yask2009}. In both cases, we let $\sum_{m=0}^\infty Q_m$ be the condensed harmonic expansion for $f$, and let $\lambda_m(n,p)$ be the eigenvalues from Lemma \ref{eigen}. As in \cite{Good&Yask2009}, the condensed harmonic expansion for $I_pf$ is then given by $\sum_{m=0}^\infty \lambda_m(n,p) Q_m$.
%\begin{align*}
%\sum_{m=0}^\infty \lambda_m(n,p) Q_m .
%\end{align*}

Assume $n\leq 2p$. An application of Stirling's formula to the equations given in Lemma \ref{eigen} shows that $\lambda_m(n,p)$ diverges to infinity as $m$ approaches infinity. The eigenvalues are also non-zero, so there is a constant $C(n,p)$ such that $C(n,p) |\lambda_m(n,p)|^2$ is greater than one for all $m$. Therefore,
\begin{align*}
\Vert f\Vert_2^2 &= \sum_{m=0}^\infty \Vert Q_m\Vert_2^2 \\
&\leq C(n,p) \sum_{m=0}^\infty \big|\lambda_m(n,p)\big|^2\Vert Q_m\Vert_2^2
	= C(n,p) \Vert I_p(f)\Vert_2^2 \leq C(n,p) \varepsilon^2 .
\end{align*}
Combining this inequality with (\ref{radial to L2}) gives the first estimate in the theorem.

Assume $n>2p$. H{\"o}lder's inequality gives
\begin{align*}
&\Vert f\Vert_2^2 = \sum_{m=0}^\infty \Vert Q_m\Vert_2^2 \\
&=\sum_{m=0}^\infty \left( \big| \lambda_m(n,p)\big|^\frac{4}{n+2-2p} \left\Vert Q_m\right\Vert_2^\frac{4}{n+2-2p} \right)
	\cdot \left( \big| \lambda_m(n,p)\big|^\frac{-4}{n+2-2p} \left\Vert Q_m\right\Vert_2^\frac{2n-4p}{n+2-2p} \right) \\
&\leq \left( \sum_{m=0}^\infty \big| \lambda_m(n,p)\big|^2 \left\Vert Q_m\right\Vert_2^2 \right)^\frac{2}{n+2-2p}
	\left( \sum_{m=0}^\infty \big| \lambda_m(n,p)\big|^\frac{-4}{n-2p} \left\Vert Q_m\right\Vert_2^2 \right)^\frac{n-2p}{n+2-2p},
\end{align*}
where we again note that the eigenvalues are all non-zero. It follows from Lemma \ref{eigen} and Stirling's formula that there is a constant $C(n,p)$ such that
\begin{align*}
\big| \lambda_m(n,p)\big|^\frac{-4}{n-2p} \leq C(n,p) m^2
\end{align*}
for all $m\geq 1$, and
\begin{align*}
\big| \lambda_0(n,p)\big|^\frac{-4}{n-2p} \leq C(n,p) .
\end{align*}
Using the identity
\begin{align}\label{2smooth}
\Vert\nabla_o f\Vert_2^2 = \sum_{m=1}^\infty m(m+n-2) \Vert Q_m\Vert_2^2
\end{align}
given by Corollary 3.2.12 in \cite{Groemer}, we then have
\begin{align*}
\Vert f\Vert_2^2 &\leq C(n,p) \left( \left\Vert I_p(f) \right\Vert_2^2 \right)^\frac{2}{n+2-2p}
	\left( \Vert Q_0\Vert_2^2 + \Vert \nabla_o f\Vert_2^2 \right)^\frac{n-2p}{n+2-2p} .
\end{align*}
The Minkowski functional of a convex body is the support function of the corresponding polar body, so
\begin{align*}
\nabla_o \Vert\xi\Vert_K^{-n+p} = (-n+p) \Vert\xi\Vert_K^{-n-1+p}\, \nabla_o h_{K^\circ}(\xi) .
\end{align*}
Because $K^\circ$ is contained in a ball of radius $r^{-1}$, it follows from Lemma 2.2.1 in \cite{Groemer} that
\begin{align*}
|\nabla_o h_{K^\circ}(\xi)| \leq 2r^{-1}
\end{align*}
for all $\xi\in S^{n-1}$. We now have
\begin{align*}
\left\Vert\nabla_o\Vert\xi\Vert_K^{-n+p}\right\Vert_2^2
	\leq 4 (n-p)^2 R^{2(n+1-p)} r^{-2} \omega_n .
\end{align*}
This constant bounds the squared $L^2$ norm of $\nabla_o \Vert\xi\Vert_L^{-n+p}$ as well, so
\begin{align*}
\big\Vert\nabla_o f\big\Vert_2^2 &\leq 16 (n-p)^2 R^{2(n+1-p)} r^{-2} \omega_n .
\end{align*}
Therefore,
\begin{align*}
\Vert f\Vert_2^2 \leq C(n,p) \varepsilon^\frac{4}{n+2-2p} \left( \varepsilon^2 + R^{2(n+1-p)} r^{-2} \right)^\frac{n-2p}{n+2-2p} ,
\end{align*}
where the constant $C(n,p)>0$ is different from before. This inequality with (\ref{radial to L2}) gives the second estimate in the theorem.
\end{proof}

\section{Proofs of Stability Results}

We are now ready to prove our stability results. %We first give the proof of Theorem \ref{MainResult_1}.

\begin{proof}[Proof of Theorem \ref{MainResult_1}]
Let $\lbrace K_\delta \rbrace_{0<\delta<1}$ be the family of smooth convex bodies from Lemma \ref{approx}. We will show that $\rho(K_\delta, -K_\delta)$ is small for $0<\delta<\alpha$, where $\alpha$ is the constant from the proof of Lemma \ref{integral of parallel}. The bounds in the theorem will then follow from
\begin{align*}
\rho(K,-K) \leq \lim_{\delta\rightarrow 0} \big( 2\rho(K,K_\delta) + \rho(K_\delta, -K_\delta) \big)
	= \lim_{\delta\rightarrow 0} \rho(K_\delta,-K_\delta) .
\end{align*}

We begin by separately considering the case $n=2$. Let the radial function $\rho_{K_\delta}$ be a function of the angle measured counter-clockwise from the positive horizontal axis. For any $\xi\in S^1$, let the angles $\phi_1$ and $\phi_2$ be functions of $t\in (-r,r)$ as indicated in Figure \ref{K}. If $\xi$ corresponds to the angle $\theta$, then the parallel section function for $K_\delta$ may be written as
\begin{align*}
A_{K_\delta,\theta}(t) = \rho_{K_\delta}(\theta + \phi_1) \, \sin \phi_1 + \rho_{K_\delta}(\theta -\phi_2) \, \sin \phi_2 .
\end{align*}
Implicit differentiation of
\begin{align*}
\cos \phi_j = \frac{t}{\rho_{K_\delta}\left(\theta - (-1)^j\phi_j\right)} \ \ (j=1,2)
\end{align*}
gives
\begin{align*}
\frac{d\phi_j}{dt}\Big|_{t=0} = \frac{(-1)}{ \rho_{K_\delta}\left( \theta - (-1)^j \frac{\pi}{2}\right) } ,
\end{align*}
so
\begin{align*}
A_{K_\delta,\theta}'(0) =
	- \frac{ \rho_{K_\delta}'\left( \theta + \frac{\pi}{2}\right) }{ \rho_{K_\delta}\left( \theta + \frac{\pi}{2}\right) }
	+ \frac{ \rho_{K_\delta}'\left( \theta - \frac{\pi}{2}\right) }{ \rho_{K_\delta}\left( \theta - \frac{\pi}{2}\right) } .
\end{align*}

\begin{figure}\label{K}
\center \includegraphics[scale=0.75]{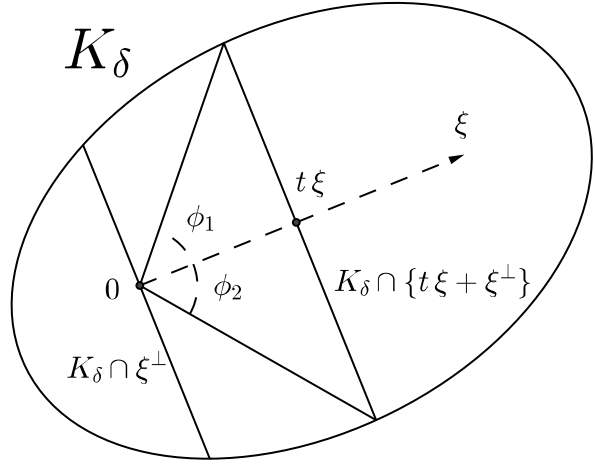}
\caption{$K_\delta$ is a convex body in $\mathbb{R}^2$, and $\xi\in S^1$.}
\end{figure}

\noindent Since $f(\phi) := \rho_{K_\delta}(\phi + \pi/2) - \rho_{K_\delta}(\phi - \pi/2)$ is a continuous function on $[0,\pi]$ with
\begin{align*}
f(0) = \rho_{K_\delta}(\pi/2) - \rho_{K_\delta}(-\pi/2) = - \big( \rho_{K_\delta}(-\pi/2) - \rho_{K_\delta}(\pi/2) \big)
	= - f(\pi) ,
\end{align*}
there exists an angle $\theta_0\in [0,\pi]$ such that $\rho_{K_\delta}( \theta_0 + \pi/2 ) = \rho_{K_\delta}( \theta_0 - \pi/2)$. With this $\theta_0$, we get the inequality
\begin{align*}
\left| \int_{\theta_0}^\theta
	\left(- \frac{ \rho_{K_\delta}'\left( \phi + \frac{\pi}{2}\right) }{ \rho_{K_\delta}\left( \phi + \frac{\pi}{2}\right) }
	+ \frac{ \rho_{K_\delta}'\left( \phi - \frac{\pi}{2}\right) }{ \rho_{K_\delta}\left( \phi - \frac{\pi}{2}\right) }\right)
	\, d\phi \right|
	\leq \int_0^{2\pi} \left| A_{K_\delta,\phi}'(0) \right| \, d\phi .
\end{align*}
Integrating the left side of this inequality, and applying Lemma \ref{integral of parallel} to the right side, gives
\begin{align*}
\left| \log\left(
	\frac{ \rho_{K_\delta}\left( \theta - \frac{\pi}{2}\right) }{ \rho_{K_\delta}\left( \theta + \frac{\pi}{2}\right) }
	\right) \right|
	\leq \left( 6\pi + \frac{32\pi}{ \sqrt{3} \, r } \right) \sqrt{\varepsilon} .
\end{align*}
This implies
\begin{align*}
1 - \exp\left[ \left( 6\pi + \frac{32\pi}{ \sqrt{3} \, r } \right) \sqrt{\varepsilon} \right]
	&\leq \exp\left[ - \left( 6\pi + \frac{32\pi}{ \sqrt{3} \, r } \right) \sqrt{\varepsilon} \right] - 1 \\
&\leq \frac{ \rho_{K_\delta}\left( \theta - \frac{\pi}{2}\right) }{ \rho_{K_\delta}\left( \theta + \frac{\pi}{2}\right) } - 1 \\
&\leq \exp\left[ \left( 6\pi + \frac{32\pi}{ \sqrt{3} \, r } \right)\ \sqrt{\varepsilon} \right] - 1 .
\end{align*}
It follows that
\begin{align*}
- 2 \left( \exp\left[ \left( 6\pi + \frac{32\pi}{ \sqrt{3} \, r } \right) \sqrt{\varepsilon} \right] - 1 \right) R
	&\leq \rho_{K_\delta}\left( \theta - \frac{\pi}{2}\right) - \rho_{K_\delta}\left( \theta + \frac{\pi}{2}\right) \\
&\leq 2 \left( \exp\left[ \left( 6\pi + \frac{32\pi}{ \sqrt{3} \, r } \right) \sqrt{\varepsilon} \right] - 1 \right) R ,
\end{align*}
since $K_\delta$ is contained in a ball of radius $2R$. Viewing $\rho_{K_\delta}$ again as a function of vectors, we have
\begin{align*}
\sup_{\xi\in S^1} \left\vert \rho_{K_\delta}(\xi) - \rho_{K_\delta}(-\xi)\right\vert
	\leq 2 \left( \exp\left[ \left( 6\pi + \frac{32\pi}{ \sqrt{3} \, r } \right) \sqrt{\varepsilon} \right] - 1 \right) R .
\end{align*}
The inequality $e^t - 1 \leq 2t$ is valid when $0<t<1$; therefore, if
\begin{align*}
\varepsilon < \left( \frac{\sqrt{3} \, r}{ 6\sqrt{3}\pi r + 32\pi } \right)^2 ,
\end{align*}
then
\begin{align*}
\sup_{\xi\in S^1} \left\vert \rho_{K_\delta}(\xi) - \rho_{K_\delta}(-\xi)\right\vert
	\leq \left( 24\pi + \frac{128\pi}{ \sqrt{3} \, r } \right) R  \sqrt{\varepsilon} .
\end{align*}

Consider the case when $n>2$. For $K_\delta$ with $p=1$, Equation (\ref{PS Func}) becomes
\begin{align*}
I_2\left( \Vert x\Vert_{K_\delta}^{-n+2} - \Vert -x\Vert_{K_\delta}^{-n+2} \right) (\xi)
	=  -2\pi i\, (n-2) A_{K_\delta,\xi}'(0) ,
\end{align*}
so
\begin{align*}
&\left\Vert I_2\left( \Vert x\Vert_{K_\delta}^{-n+2} - \Vert x\Vert_{-K_\delta}^{-n+2} \right)\right\Vert_2
	= 2\pi (n-2) \left( \int_{S^{n-1}} \big| A_{K_\delta,\xi}'(0) \big|^2 \, d\xi \right)^{\frac{1}{2}} \\
&\leq \widetilde{C}(n) \left( \sqrt{\varepsilon} + \frac{R^{2n-4}}{ r } + \frac{R^{3n-3}}{r^{n+2}} \right)^\frac{1}{2}
	\varepsilon^\frac{1}{4}
\end{align*}
by Lemma \ref{integral of parallel}. Finally, by Lemma \ref{keylemma},
\begin{align*}
\rho(K_\delta,-K_\delta) \leq C(n) \frac{ R^2 }{ r^\frac{3n-3}{n+1} }
	\left( \sqrt{\varepsilon} + \frac{R^{2n-4}}{ r } + \frac{R^{3n-3}}{r^{n+2}} \right)^\frac{1}{n+1}
	\varepsilon^\frac{1}{2(n+1)}
\end{align*}
when $n = 3 \mbox{ or } 4$, and
\begin{align*}
\rho(K_\delta,-K_\delta)
	\leq C(n) &\left[ \left( \sqrt{\varepsilon} + \frac{R^{2n-4}}{ r } + \frac{R^{3n-3}}{r^{n+2}} \right) \sqrt{\varepsilon}
	+ \frac{ R^{2(n-1)} }{ r^2 } \right]^\frac{n-4}{ (n-2)(n+1) } \\
&\cdot \left( \sqrt{\varepsilon} + \frac{R^{2n-4}}{ r } + \frac{R^{3n-3}}{r^{n+2}} \right)^\frac{2}{(n-2)(n+1)}
	\frac{ R^2 \varepsilon^\frac{1}{(n-2)(n+1)} }{ r^\frac{3n-3}{n+1} }
\end{align*}
when $n\geq 5$, where $C(n)>0$ are constants depending on the dimension.
\end{proof}

We now present the proof of our second stability result.

\begin{proof}[Proof of Theorem \ref{MainResult_2}]
Apply Lemma \ref{approx} to $K$ and $L$; let $\lbrace K_\delta\rbrace_{0<\delta<1}$ and $\lbrace L_\delta\rbrace_{0<\delta<1}$ be the resulting families of smooth convex bodies. For each $\delta$, define the constant
\begin{align*}
\varepsilon_\delta := \sup_{\xi\in S^{n-1}} \left| A_{K_\delta,\xi}^{(p)}(0) - A_{K,\xi}^{(p)}(0)\right|
	+ \sup_{\xi\in S^{n-1}} \left| A_{L_\delta,\xi}^{(p)}(0) - A_{L,\xi}^{(p)}(0)\right| + \varepsilon .
\end{align*}
Defining the auxiliary function
\begin{align*}
f_\delta(\xi) := \Vert\xi\Vert_{K_\delta}^{-n+1+p} - \Vert\xi\Vert_{L_\delta}^{-n+1+p},
\end{align*}
we have
\begin{align*}
&\cos\left( \frac{p\pi}{2}\right) I_{1+p}\big( f_\delta(x) + f_\delta(-x) \big) (\xi)
	+ i \sin\left( \frac{p\pi}{2}\right) I_{1+p}\big( f_\delta(x) - f_\delta(-x) \big) (\xi) \\
&= 2\pi (n-1-p) \Big( A_{K_\delta,\,\xi}^{(p)}(0) - A_{L_\delta,\,\xi}^{(p)}(0) \Big)
\end{align*}
from Equation (\ref{PS Func}). The function of $\xi$ on the left side of this equality is split into its even and odd parts, because $I_{1+p}$ preserves even and odd symmetry. Therefore,
\begin{align*}
&\frac{\cos\left( \frac{p\pi}{2}\right)}{\pi(n-1-p)} I_{1+p}\big( f_\delta(x) + f_\delta(-x) \big) (\xi) \\
&\qquad = \Big( A_{K_\delta,\,\xi}^{(p)}(0) - A_{L_\delta,\,\xi}^{(p)}(0)  \Big)
	+ \Big( A_{K_\delta,\,-\xi}^{(p)}(0) - A_{L_\delta,\,-\xi}^{(p)}(0) \Big)
\end{align*}
and
\begin{align*}
&\frac{i \sin\left( \frac{p\pi}{2}\right)}{\pi(n-1-p)} I_{1+p}\big( f_\delta(x) - f_\delta(-x) \big) (\xi)   \\
&\qquad = \Big( A_{K_\delta,\,\xi}^{(p)}(0) - A_{L_\delta,\,\xi}^{(p)}(0) \Big)
	- \Big( A_{K_\delta,\,-\xi}^{(p)}(0) - A_{L_\delta,\,-\xi}^{(p)}(0) \Big)
\end{align*}
By the definition of $\varepsilon_\delta$,
\begin{align*}
\Big| I_{1+p} \big( 2f_\delta\big) (\xi) \Big| &\leq \bigg| I_{1+p}\Big( f_\delta(x) + f_\delta(-x) \Big) (\xi) \bigg|
	+ \bigg| I_{1+p}\Big( f_\delta(x) - f_\delta(-x) \Big) (\xi) \bigg| \\
&\leq \frac{2\pi (n-1-p)}{\cos\left( ^{p\pi}/_{2}\right)} \,\varepsilon_\delta
	+ \frac{2\pi (n-1-p)}{\sin\left( ^{p\pi}/_{2}\right)} \,\varepsilon_\delta ,
\end{align*}
which implies
\begin{align*}
\left\Vert I_{1+p}(f_\delta)\right\Vert_2
	\leq \pi \sqrt{\omega_n} \, (n-1-p)
	\Big( \big|\sec\left( ^{p\pi}/_2\right)\big| + \big|\csc\left( ^{p\pi}/_2\right)\big| \Big) \varepsilon_\delta .
\end{align*}
Both $K_\delta$ and $L_\delta$ are contained in a ball of radius $2R$ when $0<\delta<1/2$, and contain a ball of radius $r/2$. It now follows from Lemma \ref{keylemma} that
\begin{align*}
\rho(K_\delta, L_\delta) \leq C(n,p) \, R^2 r^\frac{-3n+1+2p}{n+1} \varepsilon_\delta^\frac{2}{n+1}
\end{align*}
when $n\leq 2p+2$, and
\begin{align*}
\rho(K_\delta, L_\delta) \leq C(n,p) \, R^2 r^\frac{-3n+1+2p}{n+1}
	\left( \varepsilon_\delta^2 + \frac{ R^{2(n-p)} }{ r^2 }\right)^\frac{n-2-2p}{(n-2p)(n+1)}
	\varepsilon_\delta^\frac{4}{(n-2p)(n+1)}
\end{align*}
when $n> 2p+2$, where $C(n,p)>0$ are constants depending on the dimension and $p$. Finally, the bounds in the theorem statement follow from the observations
\begin{align*}
\rho(K,L) \leq \lim_{\delta\rightarrow 0} \Big( \rho(K,K_\delta) + \rho(L,L_\delta) + \rho(K_\delta, L_\delta) \Big)
	= \lim_{\delta\rightarrow 0} \rho(K_\delta, L_\delta) ,
\end{align*}
and $\lim_{\delta\rightarrow 0} \varepsilon_\delta = \varepsilon$.
\end{proof}

\end{document}